\author{Ofir Gorodetsky} 
\address{Mathematical Institute, University of Oxford, Oxford, OX2 6GG, UK}
\email{gorodetsky@maths.ox.ac.uk}
\title{Smooth numbers and the Dickman $\rho$ function}
\thanks{This project has received funding from the European Research Council (ERC) under the European Union's Horizon 2020 research and innovation programme (grant agreement No 851318).}
\theoremstyle{plain}
\newtheorem{thm}{Theorem}[section]
\newtheorem{lem}[thm]{Lemma}  
\newtheorem{proposition}[thm]{Proposition}
\newtheorem{cor}[thm]{Corollary}
\theoremstyle{remark}
\newtheorem{remark}{Remark}
\theoremstyle{definition}
\newtheorem*{acknowledgement}{Acknowledgements}
\newcommand{\CC}{\mathbb{C}}
\newcommand{\NN}{\mathbb{N}}
\newcommand{\RR}{\mathbb{R}}
\newcommand{\ZZ}{\mathbb{Z}}
\newcommand{\secondsad}{\beta}
\newcommand{\suprho}{\vartheta}
\newcommand{\bddfunc}{A}
\numberwithin{equation}{section}
\dedicatory{To Professor Peter Sarnak on his 70th birthday}
\begin{document}
\begin{abstract}
We establish an asymptotic formula for $\Psi(x,y)$ whose shape is $x \rho(\log x/\log y)$ times correction factors. These factors take into account the contributions of zeta zeros and prime powers and the formula can be regarded as an (approximate) explicit formula for $\Psi(x,y)$. With this formula at hand we prove oscillation results for $\Psi(x,y)$, which resolve a question of Hildebrand on the range of validity of $\Psi(x,y) \asymp x\rho(\log x/\log y)$. We also address a question of Pomerance on the range of validity of $\Psi(x,y) \ge x \rho(\log x/\log y)$. 

Along the way we improve classical estimates for $\Psi(x,y)$ and, on the Riemann Hypothesis, uncover an unexpected phase transition of $\Psi(x,y)$ at $y=(\log x)^{3/2+o(1)}$.
\end{abstract}
	\maketitle

\section{Introduction}
A positive integer is called $y$-smooth if each of its prime factors does not exceed $y$. We denote the number of $y$-smooth integers not exceeding $x$ by $\Psi(x,y)$. We assume throughout $x \ge y \ge 2$.
Let $\rho\colon [0,\infty) \to (0,\infty)$ be the Dickman function, defined as $\rho(t)=1$ for $t\le 1$ and via the delay differential equation $t\rho'(t) = -\rho(t-1)$ for $t>1$.
Dickman \cite{dickman1930} showed that
\begin{equation}\label{eq:dickman} 
\Psi(x,y) \sim x \rho( \log x / \log y) \qquad (x \to \infty)
\end{equation}
holds when $y \ge x^{\varepsilon}$. For this reason, it is useful to introduce the parameter
\[ u := \log x /\log y.\]
The range of validity of \eqref{eq:dickman} was considerably improved by de Bruijn \cite{debruijn1951} and H.~Maier (unpublished), and the state of the art is due to Hildebrand who showed that \cite{Hildebrand1986}
\begin{equation}\label{eq:debruijn}
	\Psi(x,y) = x\rho(u) \left(1+O_{\varepsilon}\left( \frac{\log (u+1)}{\log y}\right)\right)
\end{equation}
holds when $\log y \ge (\log \log x)^{\frac{5}{3}+\varepsilon}$. In \cite{Hildebrand1984}, Hildebrand showed that the Riemann Hypothesis (RH) implies that \eqref{eq:debruijn} holds in
\begin{equation}\label{eq:rhrange}
y \ge (\log x)^{2+\varepsilon}.
\end{equation}
In fact, in this range he proves the slightly stronger estimate
\begin{equation}\label{eq:stronger}
\Psi(x,y) = x\rho(u) \exp\left(O_{\varepsilon}\left( \frac{\log (u+1)}{\log y}\right)\right).
\end{equation}
The reverse implication is also true: if even the weaker estimate $\Psi(x,y)=x \rho(u)\exp(O_{\varepsilon}(y^{\varepsilon}))$ holds in the range \eqref{eq:rhrange} for every $\varepsilon>0$ then RH must be true. In \cite[p.~290]{Hildebrand1986}, Hildebrand speculates that $\Psi(x,y) \asymp x \rho(u)$ does not hold for $y \le (\log x)^{2-\varepsilon}$. He writes 
\begin{quote}\label{eq:quotehild}
	If the Riemann hypothesis is assumed, the range for $u$ can be further extended to $ 1\le u \le \log x/(2+\varepsilon)\log \log x$, but it seems likely that then the critical limit is attained: it may be conjectured that for $u>\log x/(2-\varepsilon)\log \log x$, the relation $\Psi(x,x^{1/u}) \sim x \rho(u)$ no longer holds.
\end{quote}
This conjecture is repeated in \cite{HildebrandLocal} and by Granville in \cite{Granville1989,Granville1993}. We confirm it in stronger form. To state our result we recall the function
\[ K \colon (-1,0] \to \RR, \qquad K(t) = t\zeta(t+1)/(t+1), \qquad K(0)=1,\]
was introduced by de Bruijn \cite[Eq.~(2.8)]{debruijn1951}. Evidently $\lim_{t \to -1^+} K(t)= \infty$. It is strictly decreasing: the identity $\zeta(s)=s/(s-1) -s \int_{1}^{\infty}\{x\}x^{-1-s}dx$ for $s >0$ \cite[Eq.~(1.24)]{MV} implies $K(t) = 1-t\int_{1}^{\infty}\{x\}x^{-2-t}dx$ and $K'(t)=-\int_{1}^{\infty}\{x\}(1-t\log x)x^{-2-t}dx<0$.
\begin{thm}\label{thm:hildconjstronger} Let $\suprho := \sup_{\zeta(\rho)=0} \Re \rho \in [1/2,1]$ be supremum of the real parts of the zeros of the Riemann zeta function. Fix $\varepsilon>0$. 
\begin{enumerate}
	\item Suppose $\suprho\neq 1$. If $x\ge y \ge (\log x)^{1/(1-\suprho)+\varepsilon}$ then
\begin{equation}\label{eq:rhoasymp}
\Psi(x,y) \sim x \rho(u) K(-\log \log x/\log y), \qquad x \to \infty.
\end{equation}
\item Given $A \in (1/\suprho,1/(1-\suprho))$\footnote{If $\suprho=1$ we define $1/(1-\suprho):=\infty$.} and $\mathrm{sgn} \in \{+,-\}$ there exists an explicit function $x=x(y)$ of $y$ satisfying $y=(\log x)^{A+o(1)}$ and
\begin{equation}\label{eq:omegaplus}
\Psi(x,y) = x \rho(u) \exp(\Omega_{\mathrm{sgn} }(y^{\suprho+A^{-1}-1-\varepsilon})) .
\end{equation}
\item Suppose $\suprho \neq 1$. If $2 \log x\le y\le (\log x)^{1/\suprho-\varepsilon}$ and $x \ge C_{\varepsilon}$ then
\begin{equation}\label{eq:3rdthm}
\Psi(x,y) = x \rho(u)  \exp\bigg(\Theta_{\varepsilon}\bigg( \frac{ \log^2 x}{y \log y} \bigg)\bigg).
\end{equation}
\end{enumerate}
\end{thm}
The key input to \eqref{eq:omegaplus} is Landau's Oscillation Theorem. See Remark \ref{rem:omega} for a refinement.

In \cite{Granville2008,Lichtman}, Pomerance asked whether $\Psi(x,y) \ge x \rho(u)$
holds for all $x/2 \ge y \ge 1$. 
In \cite[p.~274]{Granville2008}, Granville proved that $\Psi(x,y) > 2x\rho(u)$ holds for $y \le c(\log x)(\log \log x)/\log \log \log x$ if $x \ge C$.
Below we extend Granville's range considerably. Moreover, if RH is true, we show $\Psi(x,y) \ge x\rho(u)$ holds when $y \not\in [(\log x)^{2-\varepsilon}, (\log x)^{2+\varepsilon}]$. For $y$ near $(\log x)^2$, the question lies beyond RH in a precise sense, but we indicate that a positive answer follows from a conjecture of Montgomery and Vaughan on the size of the remainder term in the PNT. If RH is false, the $\Omega_{-}$ result in \eqref{eq:omegaplus} already implies the inequality fails infinitely often.

Let
\[ L := \max_{v \in \RR} e^v \left( -\log (-\zeta(1/2)) - \frac{1}{2}\int_{v}^{2v} e^{-r}r^{-1}dr \right)\approx -0.666217.\]
\begin{thm}\label{thm:pom}Fix $\varepsilon>0$ and suppose $x \ge C_{\varepsilon}$.
\begin{enumerate}
	\item For $y \in [\exp((\log \log x)^{5/3 +\varepsilon}),(1-\varepsilon) x]$ we have
	\begin{equation}\label{eq:smallu}
	\Psi(x,y) = x\rho(u) (1+\Theta_{\varepsilon} (\log (u+1)/\log y)) \ge x\rho(u)
\end{equation}
	and for $y \in [\varepsilon\log x,\exp((\log \log x)^{3/5-\varepsilon})\log x]$ 
	\begin{equation}\label{eq:granvilleextended}
	\Psi(x,y) = x \rho(u) \exp\left(\Theta_{\varepsilon}\left( \frac{\log^2 x}{y \log y}\right)\right) \ge x \rho(u).
\end{equation}
	\item Suppose RH is true. Then \eqref{eq:smallu} holds for $y \in [(\log x)^{2+\varepsilon}, (1-\varepsilon)x]$ and \eqref{eq:granvilleextended} holds for $y \in [\varepsilon\log x, (\log x)^{2-\varepsilon}]$. If $\Psi(x,y)\ge x\rho(u)$ holds for $y\in[(\log x)^{3/2},(\log x)^{3}]$ then
	\begin{equation}\label{eq:necc}
	\liminf_{y \to \infty} \frac{\psi(y)-y}{\sqrt{y}\log y} \ge L.
\end{equation}
	If \eqref{eq:necc} holds with strict inequality then $\Psi(x,y)\ge x\rho(u)$ holds for $y\in [2,(1-\varepsilon)x]$.
\end{enumerate}
\end{thm}
RH implies $\psi(y)-y \ll\sqrt{y}(\log y)^2$ \cite[Thm.~13.1]{MV}. It is believed that 
\begin{equation}\label{eq:munsch} \liminf_{y \to \infty} \frac{\psi(y)-y}{\sqrt{y}(\log \log \log y)^2} = -\frac{1}{2\pi},
\end{equation}
see \cite[p.~484]{MV}; \eqref{eq:munsch} implies that the limit considered in \eqref{eq:necc} is $0$. 
\subsection*{Conventions and notation}
The letters $C,c$ denote absolute positive constants that may change between different occurrences. The notation $A \ll B$ means $|A| \le C B$ for some absolute constant $C$, and $A\ll_{a,b,\ldots} B$ means $|A|\le C_{a,b,\ldots} B$ for $C_{a,b,\ldots}$ that may depend on the subscripts. We write $A \asymp B$ to mean $C_1 B \le A \le C_2 B$ for some absolute positive constants $C_i$, and $A \asymp_{a,b,\ldots} B$ means $C_i$ may depend on $a,b,\ldots$. We write $\Theta(B)$ and $\Theta_{a,b,\ldots}(B)$ to indicate a quantity $A$ with $A\asymp B$ and $A \asymp_{a,b,\ldots} B$, respectively. We write $\Omega_{+}(g(x))$ (resp.~$\Omega_{-}(g(x))$) to indicate a function $f(x)$ with $\limsup_{x\to \infty} f(x)/g(x)>0$ (resp.~$\liminf_{x \to \infty} f(x)/g(x)<0$). A function $f$ is $\Omega_{\pm}(g(x))$ if $\limsup f/g>0>\liminf f/g$. Throughout 
\[L(x) = \exp((\log x)^{\frac{3}{5}}( \log \log (x+1))^{-\frac{1}{5}}).\]
We denote
\[\suprho = \sup_{\zeta(\rho)=0} \Re \rho \in [1/2,1].\]
For $y\ge 2$ and $\Re s>0$ we define the partial zeta function
\[\zeta(s,y)=\prod_{p \le y} (1-p^{-s})^{-1}=\sum_{n \text{ is }y\text{-smooth}} n^{-s}.\]
We define $\xi\colon [1,\infty) \to [0,\infty)$ via \[e^{\xi(v)}=1+v\xi(v).\]
We define the entire function \[I(s)=\int_{0}^{s} \frac{e^v-1}{v}dv= \sum_{i \ge 1} \frac{s^i}{i!i}.\] 
We denote the Euler--Mascheroni constant by $\gamma$.
The Laplace transform of $\rho$ is given by \cite[Eq.~(1.9)]{debruijn19512}
\[\hat{\rho}(s) := \int_{0}^{\infty} \rho(v)e^{-sv}dv =\exp( \gamma + I(-s))\]
for $s \in \CC$. We define
\[	F(s,y)=\zeta(s)(s-1)F_2(s,y)\]
for $s \in \CC$	where
	\[	F_2(s,y) =  \hat{\rho}((s-1)\log y)\log y.\]
We write $\psi(x)=\sum_{n \le x} \Lambda(n)$ for the Chebyshev function.
 We set
\[ \bar{u}=\min\{y/\log y, u\}.\]
If a meromorphic function has a removable singularity (e.g.~$\zeta(s)(s-1)$ at $s=1$), we identify its value there with its limit.
When we differentiate a bivariate function (e.g.~$\zeta(s,y)$) we always do so with respect to the first variable. In sums and products over $p$, $p$ is understood to be prime. Throughout, $\varepsilon>0$ is an arbitrary fixed constant.
\section{A formula and its investigation}\label{sec:proofs}
\subsection{On two saddle points}
Let $f \colon \RR \to \RR$ be given by
\begin{equation}\label{eq:f}
	f(t):= t\log x + \log( F_2(t,y)).
\end{equation}
It satisfies
\begin{align}
	f'(t) &= \log x -(\log y) I'((1-t)\log y), \qquad f''(t)=  (\log y)^2I''((1-t)\log y).
\end{align}
The function $f$ is convex since $I''(-r)=r^{-2}e^{-r}(e^{r}-(r+1))>0$. We define 
\[ \secondsad=\secondsad(x,y) = 1-\frac{\xi(u)}{\log y}\]
for $u=\log x/\log y$. We often shorten $\xi(u)$ to $\xi$ when no confusion may arise. 
The function $f$ attains its global minimum at $\secondsad$ because it is readily verified that 
\begin{equation}\label{eq:betaderiv}
	f'(\secondsad)=0.
\end{equation}
The asymptotics of $\secondsad$ are well understood thanks to the next lemma.
\begin{lem}\label{lem:xilem}\cite[Lem.~1]{HildebrandTenenbaum1986}\cite[Lem.~4.5]{Smida}
	For $v \ge 3$ we have
	\begin{align}
		\label{eq:xiasymp}\xi(v) &= \log v + \log \log v + O(\log \log v/ \log v),\\
		\label{eq:derivxi}		I''(\xi(v))&=v(1+O(1/\log v)).
	\end{align}
\end{lem}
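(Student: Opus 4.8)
The plan is to analyse the implicitly defined function $\xi$ directly, deriving both estimates from the defining relation $e^{\xi(u)}=1+u\xi(u)$ together with the identities obtained by taking logarithms and by differentiating. First I would record the basic qualitative facts. Put $f_u(\xi)=e^\xi-1-u\xi$, so $f_u(0)=0$, $f_u'(\xi)=e^\xi-u$, and $f_u(\log u)=u-1-u\log u<0$ for $u>1$ (since $u\log u-u+1$ vanishes at $u=1$ and has derivative $\log u>0$); as $f_u(\xi)\to\infty$ there is a unique solution $\xi(u)>\log u$, and at it $f_u'(\xi(u))=e^{\xi(u)}-u=1+u(\xi(u)-1)>0$, so the implicit function theorem makes $\xi$ smooth on $(1,\infty)$ and $\xi(u)>\log u\to\infty$. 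Taking logarithms of $e^{\xi}=1+u\xi$ gives
\[
\xi(u)=\log\!\big(1+u\xi(u)\big)=\log u+\log\xi(u)+\log\!\Big(1+\tfrac{1}{u\xi(u)}\Big),
\]
so $\xi(u)=\log u+\log\xi(u)+O\!\big(1/(u\log u)\big)$. A crude upper bound $\log\xi(u)\le\log(2\log u)$ then forces $\xi(u)/\log u=1+O(\log\log u/\log u)$, whence $\log\xi(u)=\log\log u+O(\log\log u/\log u)$, and substituting once more yields \eqref{eq:xiasymp}. The implied constants are uniform for $u\ge 3$ because $\xi(u)>\log u\ge\log 3$.

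For \eqref{eq:derivxi} I would differentiate $e^{\xi(u)}=1+u\xi(u)$ to obtain $\xi'(u)e^{\xi(u)}=\xi(u)+u\xi'(u)$, hence
\[
\xi'(u)^{-1}=\frac{e^{\xi(u)}-u}{\xi(u)}=\frac{1+u(\xi(u)-1)}{\xi(u)}=u\Big(1-\frac{1}{\xi(u)}\Big)+\frac{1}{\xi(u)}.
\]
Since \eqref{eq:xiasymp} gives $\xi(u)\asymp\log u$ for $u\ge 3$, the right-hand side equals $u\big(1+O(1/\log u)\big)$, which is \eqref{eq:derivxi}.

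There is no real obstacle here: this is a routine analysis of an implicitly defined function, and indeed the statement is quoted from \cite{HildebrandTenenbaum1986,Hildebrand1984}. The only points requiring a little care are the well-definedness and smoothness of $\xi$ (handled by the sign computation $f_u(\log u)<0$ plus the implicit function theorem) and the bookkeeping in the bootstrap, so that the error in \eqref{eq:xiasymp} is genuinely $O(\log\log u/\log u)$ rather than merely $o(1)$; both are straightforward once $\xi(u)>\log u$ is in hand.
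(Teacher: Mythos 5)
Your argument is correct, and it follows the standard direct route: establish existence, smoothness, and the crude bracketing $\log u<\xi(u)\ll\log u$, then bootstrap the logarithmic identity $\xi=\log u+\log\xi+\log(1+1/(u\xi))$, and finally differentiate the defining equation to express $\xi'(u)^{-1}=\big(1+u(\xi(u)-1)\big)/\xi(u)$. Note that the paper itself does not prove this lemma; it is imported by citation from Hildebrand \cite{Hildebrand1984} and Hildebrand--Tenenbaum \cite{HildebrandTenenbaum1986}, and your derivation is essentially the one found there.

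One small point you gloss over: you invoke the crude upper bound $\log\xi(u)\le\log(2\log u)$ without first establishing $\xi(u)\le 2\log u$. This is easy but should be said, e.g.\ by checking $f_u(2\log u)=u^2-1-2u\log u>0$ for $u\ge 3$ (so that the unique zero of $f_u$ in $(\log u,\infty)$ lies to the left of $2\log u$), or by noting that $\xi-\log\xi=\log u+\log(1+1/(u\xi))\le\log u+1$ and that $t\mapsto t-\log t$ is increasing for $t>1$, which pins $\xi(u)$ below $2\log u$ once $u$ is moderately large. With that inserted, the bootstrap is airtight, and the two stated estimates follow exactly as you wrote them.
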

\begin{cor}\label{cor:sigmasize}Fix $\varepsilon>0$.
	\begin{enumerate}
		\item 
		If $x\ge y\ge (1+\varepsilon) \log x$ then
		\begin{equation}\label{eq:sigmasymp}
			\secondsad= \frac{\log\big( \frac{y}{\log x}\big)}{\log y} \bigg(1 + O_{\varepsilon}\bigg( \frac{\log \log (y+1)}{\log y}\bigg)\bigg).
		\end{equation}
		\item If $2 \log x \ge y \ge \varepsilon\log x$ then $\secondsad = O_{\varepsilon}(1/\log y)$.
		\item We have $\secondsad \le 1$. Equality occurs if and only if $y=x$. 
		\item We have $\secondsad \ge 0$ if and only if $y\ge 1+\log x$, and $\secondsad=0$ if and only if $y=1+\log x$.
	\end{enumerate}
\end{cor}
\begin{proof}
	If $u<3$ then \eqref{eq:sigmasymp} means $\secondsad=1+O(\log \log (x+1)/\log x)$ which follows from $\xi(u)=O(1)$. If $u\ge 3$ we use \eqref{eq:xiasymp} to write
	\[ \secondsad = \frac{\log (y/(u \log u)) + O(\log \log (u+1)/\log u)}{\log y}\]
	and reduce \eqref{eq:sigmasymp} to
	\begin{equation}\label{eq:red}
		\log \left(\frac{\log y}{\log u}\right) + O\left(\frac{\log \log (u+1)}{\log u}\right) \ll_{\varepsilon}\log\left(\frac{y}{\log x}\right)\frac{\log\log (y+1)}{\log y}.
	\end{equation}
	If $x\ll_{\varepsilon} 1$ then $y\ll_{\varepsilon} 1$ and \eqref{eq:sigmasymp} is trivial, so we may  assume $x \ge C_{\varepsilon}$.
	If $y \ge \log^{3/2} x$ then the right-hand side of \eqref{eq:red} is $\gg \log \log(y+1)$ and it suffices to show $\log \log u \ll_{\varepsilon} \log \log (y+1)$, which is clear. If $(1+\varepsilon)\log x\le y<\log^{3/2} x$, the right-hand side of \eqref{eq:red} is 
	\[ \gg_{\varepsilon} \log\left(\frac{y}{\log x}\right) \frac{\log \log \log x}{\log \log x}. \]
	Since $\log \log (u+1)/\log u \ll_{\varepsilon} \log \log \log x/\log \log x$, \eqref{eq:red} reduces further to
	\begin{equation}\label{eq:further2}
		\log\left(\frac{\log y}{\log u}\right) \ll_{\varepsilon}  \log\left(\frac{y}{\log x}\right) \frac{\log \log \log x}{\log \log x}.
	\end{equation}
	We write
	\[\frac{\log y}{\log u} = \frac{\log y}{\log \log x} \big(1 -\frac{\log \log y}{\log \log x}\big)^{-1} = \big(1+\frac{\log(y/\log x)}{\log \log x}\big)(1-\frac{\log \log y}{\log \log x}\big)^{-1}\]
	and \eqref{eq:further2} follows by using $\log(1+t)\ll |t|$ for $|t|\le 1/2$.
	The second part of the lemma is similar to the first and is left to the reader.
	The third part follows from $\xi$ being $0$ at $v=1$ and being strictly increasing. For the last part we need to solve $\secondsad \ge 0$, or $\log y\ge \xi(u)$. Since $\xi$ is strictly increasing, it suffices to solve $\log y= \xi(u)$. Exponentiating, this implies $y = e^{\xi(u)} = 1+u\xi(u) =  1+\log x$.
\end{proof}
Let $g \colon (0,\infty) \to \RR$ be given by 	
\[ 	g(t):= t\log x + \log \zeta(t,y).\]
It satisfies
\[ g'(t) = \log x- \sum_{p \le y} \frac{\log p}{p^{t}-1}, \qquad g''(t)=\sum_{p\le y} \frac{p^t (\log p)^2}{(p^t -1 )^2}.\]
The function $g$ is convex because $g''(t) >0$ for $t>0$. Since $\lim_{t\to 0^+}g(t)=\lim_{t\to \infty}g(t)=\infty$ it follows that $g$ has a global minimum, attained at a point 
\[\alpha=\alpha(x,y)\]
which must satisfy
\begin{equation}\label{eq:alphaderiv}
	g'(\alpha)=0.
\end{equation}
The following theorem of Hildebrand and Tenenbaum expresses the asymptotics of $\Psi(x,y)$ in terms of $\alpha$.
\begin{thm}\cite[Thms.~1, 2]{HildebrandTenenbaum1986}\label{thm:htsaddle}
	For $x \ge y \ge 2$ we have
	\begin{equation}\label{eq:HTsaddle} \Psi(x,y)=\frac{x^{\alpha}\zeta(\alpha,y)}{\alpha\sqrt{2\pi\phi_2(\alpha,y)}}(1 +O(\bar{u}^{-1}))
	\end{equation}
	where
	\begin{equation}\label{eq:phi2}
		\phi_2(\alpha,y):=\sum_{p \le y}\frac{p^{\alpha}(\log p)^2}{(p^{\alpha}-1)^2}= \left(1+\frac{\log x}{y} \right)(\log x)(\log y)(1+O(\log(1+\bar{u})^{-1})).
	\end{equation}
Additionally,
\begin{equation}\label{eq:alphasize}
	 \alpha = \frac{\log\big( 1+\frac{y}{\log x}\big)}{\log y}\bigg(1+O\bigg(\frac{\log \log (y+1)}{\log y}\bigg)\bigg).
	\end{equation}
\end{thm}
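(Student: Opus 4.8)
The plan is to prove \eqref{eq:HTsaddle} by the saddle-point method applied to a Perron-type contour integral. Since the Dirichlet series $\zeta(s,y)=\sum_{n\text{ is }y\text{-friable}}n^{-s}$ has abscissa of convergence $0$, the truncated Perron formula gives, for any $\alpha>0$ and $T\ge 1$,
\[ \Psi(x,y)=\frac{1}{2\pi i}\int_{\alpha-iT}^{\alpha+iT}\zeta(s,y)\frac{x^{s}}{s}\,ds+O\Bigl(x^{\alpha}\sum_{n\text{ is }y\text{-friable}}\frac{1}{n^{\alpha}}\min\Bigl(1,\frac{1}{T|\log(x/n)|}\Bigr)\Bigr),\]
and one takes $\alpha=\alpha(x,y)$ to be the saddle point, i.e.\ the minimizer on $(0,\infty)$ of $\sigma\mapsto\zeta(\sigma,y)x^{\sigma}$, equivalently the unique solution of $\phi_{1}(\alpha,y)=\log x$, where $\phi_{1}(\sigma,y):=-\tfrac{d}{d\sigma}\log\zeta(\sigma,y)=\sum_{p\le y}(\log p)/(p^{\sigma}-1)$. (A cleaner variant replaces the sharp cut-off by a smooth weight — treating $\sum_{n}a_{n}(1-n/x)$ or $\int_{1}^{x}\Psi(t,y)\,dt/t$ first and deconvolving via the monotonicity of $\Psi$ in $x$ — but either route should leave a total error $O(x^{\alpha}\zeta(\alpha,y)/u)$ for a well-chosen $T$.) Before the contour work one records the needed estimates for the derivatives of $\log\zeta(\sigma,y)$: with $\phi_{2}(\sigma,y)=-\phi_{1}'(\sigma,y)=\sum_{p\le y}p^{\sigma}(\log p)^{2}/(p^{\sigma}-1)^{2}$ (as in \eqref{eq:phi2}) and a comparable bound for $\phi_{3}=-\phi_{2}'$, compare these prime sums to integrals through Chebyshev/Mertens estimates; inverting $\phi_{1}(\alpha,y)=\log x$ then gives \eqref{eq:alphasize}, and the same comparison gives $\phi_{2}(\alpha,y)\asymp(1+\log x/y)\log x\log y$. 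These computations are routine but must be made uniform down to $y=\log x$, where $\alpha\to 0$.

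Next comes the local analysis near the saddle. On the segment $s=\alpha+i\tau$ one Taylor-expands
\[ \log\!\bigl(\zeta(\alpha+i\tau,y)x^{\alpha+i\tau}\bigr)=\log\!\bigl(\zeta(\alpha,y)x^{\alpha}\bigr)+i\tau\bigl(\log x-\phi_{1}(\alpha,y)\bigr)-\frac{\tau^{2}}{2}\phi_{2}(\alpha,y)+O\!\bigl(|\tau|^{3}\phi_{3}(\alpha,y)\bigr);\]
the defining relation for $\alpha$ annihilates the linear term, so on a short range $|\tau|\le\tau_{0}$ with $\tau_{0}$ chosen so that $\tau_{0}^{2}\phi_{2}(\alpha,y)\to\infty$ while $\tau_{0}^{3}\phi_{3}(\alpha,y)\to 0$ (such $\tau_{0}$ exists since $\phi_{3}(\alpha,y)=o(\phi_{2}(\alpha,y)^{3/2})$), the integrand equals $\zeta(\alpha,y)x^{\alpha}e^{-\tau^{2}\phi_{2}(\alpha,y)/2}(1+o(1))$. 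For the factor $1/s=(\alpha-i\tau)/(\alpha^{2}+\tau^{2})$ the odd part drops out by the symmetry $\tau\mapsto-\tau$ and the even part is $\approx 1/\alpha$ on the bulk of the Gaussian (using $\alpha^{2}\phi_{2}(\alpha,y)\to\infty$); completing the Gaussian integral over $\RR$ then yields the main term $x^{\alpha}\zeta(\alpha,y)/(\alpha\sqrt{2\pi\phi_{2}(\alpha,y)})$, with the cubic term, the Gaussian tail $|\tau|>\tau_{0}$, the $1/s$ expansion and the Perron error each contributing a relative error $O(1/u)$.

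The remaining, and decisive, point is to show that $\tau_{0}<|\tau|\le T$ contributes negligibly relative to the main term. The basic tool is the exact identity, for real $\alpha>0$,
\[ \log\frac{|\zeta(\alpha+i\tau,y)|}{\zeta(\alpha,y)}=-\sum_{p\le y}\sum_{k\ge 1}\frac{1-\cos(k\tau\log p)}{k\,p^{k\alpha}}\le 0,\]
supplemented by quantitative lower bounds for the right-hand sum that decay slowly enough in $\tau$: for moderate $\tau$ one shows $\sum_{p\le y}(1-\cos(\tau\log p))p^{-\alpha}\gg\min\{1,\tau^{2}\phi_{2}(\alpha,y)\}$, so the integrand is down from the saddle by a factor $e^{-c}$ or better; for $\tau$ up to roughly $y$ a more delicate argument exploiting the density of primes $\le y$ is required; and for very large $|\tau|$ one falls back on the $1/|\tau|$ decay from $1/s$ together with crude bounds, or simply chooses $T$ small enough that this last tail does not arise. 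Partitioning $[\tau_{0},T]$ into these sub-ranges and summing the resulting bounds is the technical heart of the argument, and it is here that the hypothesis $y\ge\log x$ (equivalently, a suitable lower bound on $\alpha$) is used; I expect this tail estimate to be the main obstacle. Combining the main term with the negligible tail and the Perron error gives \eqref{eq:HTsaddle}, while \eqref{eq:phi2} and \eqref{eq:alphasize} come from the preliminary prime-sum estimates.
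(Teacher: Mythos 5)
This theorem is a cited result in the paper (attributed to Hildebrand--Tenenbaum 1986); the paper gives no proof, so the comparison is against the original. Your outline follows the Hildebrand--Tenenbaum argument correctly: Perron's formula on the line $\Re s = \alpha$ with $\alpha$ the saddle point (equivalently the root of $\phi_1(\alpha,y)=\log x$), a cubic Taylor expansion of $\log(\zeta(s,y)x^s)$ near the saddle with the linear term annihilated, completion of the Gaussian integral to produce the main term $x^\alpha\zeta(\alpha,y)/(\alpha\sqrt{2\pi\phi_2(\alpha,y)})$, and tail bounds from the cosine identity $\log(|\zeta(\alpha+i\tau,y)|/\zeta(\alpha,y))=-\sum_{p\le y}\sum_{k\ge1}(1-\cos(k\tau\log p))/(kp^{k\alpha})$. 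Your observation that $\tau_0$ must satisfy $\tau_0^2\phi_2\to\infty$ and $\tau_0^3\phi_3\to 0$ is consistent with the estimate $\phi_3/\phi_2^{3/2}\asymp 1/\sqrt{u}$, and the requirement $\alpha^2\phi_2\to\infty$ for the $1/s$ factor is where $y\ge\log x$ (so $\alpha\gg 1/\log y$) earns its keep.

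However, this is an outline, not a proof: the decisive tail estimate over $\tau_0<|\tau|\le T$ is sketched at the level of strategy (split the range, use the cosine lower bound, exploit the density of primes $\le y$) and you explicitly defer it as the main obstacle. Two specific gaps to flag: (i) your suggested lower bound $\sum_{p\le y}(1-\cos(\tau\log p))p^{-\alpha}\gg\min\{1,\tau^2\phi_2(\alpha,y)\}$ does not hold as written when $\alpha\log y\ll 1$, because $\sum_{p\le y}(\log p)^2p^{-\alpha}$ and $\phi_2(\alpha,y)$ are not comparable in that regime ($p^\alpha/(p^\alpha-1)^2$ behaves like $(\alpha\log p)^{-2}$, not $p^{-\alpha}$); the actual Hildebrand--Tenenbaum argument uses a more carefully tailored lower bound for moderate $\tau$ and a separate device for $\tau$ comparable to or larger than $1$. (ii) The Perron error term needs quantitative control uniform down to $y=\log x$, and choosing $T$ so that both the Perron error and the vertical tail sit inside the $O(1/u)$ budget is where the uniformity claim lives; this is asserted but not verified. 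Your approach is the right one and matches the source, but as submitted it would not pass as a self-contained proof of the stated uniform result.
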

The points $\alpha$ and $\secondsad$ are known to be close:
\begin{lem}\label{lem:sigmastuff}
	For $x \ge y \ge \varepsilon \log x$ we have $\secondsad-\alpha = O_{\varepsilon}(1/\log y)$.
\end{lem}
\begin{proof}
	If $x \ge y > \log x$ this is in \cite[Eq.~(3.5)]{HildebrandTenenbaum1986}. If $\log x \ge y \ge \varepsilon \log x$ both $\secondsad$ and $\alpha$ are $O_{\varepsilon}(1/\log y)$ by Corollary \ref{cor:sigmasize} and \eqref{eq:alphasize}.
\end{proof}
The following lemma is useful in simplifying the order of magnitude of $y^{\alpha}$ and $y^{\secondsad}$.
\begin{lem}\label{lem:ytoapower}
	For $x \ge y \ge 2$ we have $y^{1-\secondsad}\asymp u\log(u+1)$. For $x \ge y \ge \varepsilon\log x$ we have $y^{1-\alpha} \asymp_{\varepsilon} u\log(u+1)$.
\end{lem}
\begin{proof}
	The estimate for $y^{1-\secondsad}$ follows from the definition of $\secondsad$ and Lemma \ref{lem:xilem}. The estimate for $y^{1-\alpha}$ follows from the estimate for $y^{1-\secondsad}$ by Lemma \ref{lem:sigmastuff}.
\end{proof}
\subsection{The function \texorpdfstring{$G$}{G}}\label{sec:int}
We introduce
\[ G(s,y):= \frac{\zeta(s,y)}{F(s,y)}.\]
The following identities are tautological in their domain of definition:
\begin{equation}\label{eq:taut}
	\begin{split} \frac{x^{\alpha}\zeta(\alpha,y)}{x^{\secondsad}F_2(\secondsad,y)} &= \zeta(\alpha)(\alpha-1)G(\alpha,y)\exp(f(\alpha)-f(\secondsad))\\
		&=\zeta(\secondsad)(\secondsad-1)G(\secondsad,y)\exp(g(\alpha)-g(\secondsad)).
	\end{split}
\end{equation}
Let
\[ B(x,y):= \sqrt{\frac{(\log y)^2 I''(\xi(u))}{\phi_2(\alpha,y)}}.\]
Alladi proved \cite[Eq.~(3.9)]{Alladi1982}
\[	\rho(u) = \frac{e^{-u\xi}\hat{\rho}(-\xi)}{\sqrt{2\pi I''(\xi(u))}}(1+O(u^{-1}))\]
which can be phrased as
	\begin{equation}\label{eq:rho and i}
	x\rho(u) = \frac{x^{\secondsad}F_2(\secondsad,y)}{\sqrt{2\pi (\log y)^2 I''(\xi(u))}}(1+O(u^{-1})).
\end{equation}
Dividing \eqref{eq:HTsaddle} by \eqref{eq:rho and i} and employing \eqref{eq:taut} we obtain
\begin{lem}[Asymptotic formula]\label{lem:first}
If $ x \ge y \ge 2$ then $f(\alpha)\ge f(\secondsad)$ and
\begin{equation}\label{eq:firstpartasymp}
\frac{\Psi(x,y)}{x\rho(u)}=K(\alpha-1)G(\alpha,y) \exp( f(\alpha)-f(\secondsad)) B(x,y) ( 1 +O( \bar{u}^{-1} )).
\end{equation}
If $x \ge y > 1+ \log x$ then $g(\secondsad) \ge g(\alpha)$ and 
\[\frac{\Psi(x,y)}{x\rho(u)} = K(\secondsad-1)G(\secondsad,y) \exp( g(\alpha)-g(\secondsad)) \frac{\secondsad}{\alpha}B(x,y) ( 1 +O( u^{-1} )).\]
\end{lem}
When $y/\log x \to \infty$, \eqref{eq:sigmasymp} and \eqref{eq:alphasize} show $\alpha/\secondsad \sim 1$ and $K(\alpha-1)\sim K(\secondsad-1) \sim K(-\log \log x/\log y)$. If further $u \to \infty$ then $B(x,y) \sim 1$ by \eqref{eq:phi2} and \eqref{eq:derivxi}. Lemma \ref{lem:first} then implies
\begin{equation}\label{eq:simp}
	G(\alpha,y)(1+o(1))\le \Psi(x,y)/(x\rho(u) K(-\log \log x/\log y)) \le G(\secondsad,y)(1+o(1))
\end{equation}
as long as $u$ and $y/\log x$ both tend to $\infty$.

The savings in \eqref{eq:HTsaddle} and \eqref{eq:rho and i} are sharp. We can show that the lower order terms within the error terms are close, which allows us to prove in \S\ref{sec:sharpen} the following
\begin{proposition}\label{prop:sharpen}
If $y>1+\log x$ then the error terms in Lemma \ref{lem:first} are $O(1/(\alpha \log x))$.
\end{proposition}
\begin{remark}
With more work, $O(1/(\alpha \log x))$ may be improved to an explicit term of size $\asymp 1/(\alpha \log x)$ and an error of size $O(1/(\alpha^2 (\log x) (\log y)))$.
\end{remark}
Working in a zero-free region of $\zeta$ we choose the following logarithms of $\zeta(s,y)$ and $F(s,y)$:
\begin{align}
	\log \zeta(s,y) &= \sum_{p \le y} -\log(1-p^{-s}) = \sum_{n \text{ is }y\text{-smooth}} \Lambda(n)/(n^{s}\log n),\\
	\log F(s,y) &=  \log \log y + \gamma + I((1-s)\log y)+\log (\zeta(s)(s-1))
\end{align}
where $\log(\zeta(s)(s-1))$ is chosen to be real-valued for $s>1$. In further using Lemma \ref{lem:first} it will be of crucial importance to split $G$ as $G_1 G_2$ where
\begin{align}
	\log G_1(s,y) &= \sum_{n \le y} \frac{\Lambda(n)}{n^s\log n } -(\log (\zeta(s)(s-1)) + \log \log y + \gamma + I((1-s)\log y)),\\
	\log G_2(s,y) &= \sum_{k \ge 2} \sum_{y^{1/k} <p \le y} \frac{p^{-ks}}{k}.
\end{align}
We compute the Mellin transform of $\log G_1(s,y)$ (Proposition \ref{prop:mellinlogg1}) and then, via Landau's Oscillation Theorem, obtain the following lemma, essential to the proof of Theorem \ref{thm:hildconjstronger}.
\begin{lem}\label{lem:osc}
Fix $s > -2$. Then, as $x \to \infty$, $\log G_1(s,x)=\Omega_{\pm} (x^{\suprho-s-\varepsilon})$.
\end{lem}
Lemma \ref{lem:osc} is proved in \S\ref{sec:osc}. For $s=0$ it goes back to Landau. For $s=1$ it is due to Diamond and Pintz \cite{Diamond} and our standard proof follows theirs. 

The function $\log G_1(s,y)$ and its derivatives with respect to $s$ can be expressed as sums over zeros of $\zeta$ (Corollaries \ref{cor:logG1s2}--\ref{cor:S1is2}). This allows us to prove in \S\ref{sec:estimates} the following 
\begin{lem}\label{lem:logg1size}
Fix $i \ge 0$. For $\varepsilon -2 \le s \le 1/\varepsilon$ and $x \ge 4$ we have
	\begin{align}
		\label{eq:zerofree}	(\log G_1)^{(i)}(s,x) &\ll_{i,\varepsilon} (\log x)^{i}  x^{1-s}  L(x)^{-c}\ll_{i,\varepsilon}  x^{1-s}  L(x)^{-c_i},\\
		\label{eq:logGpsi2}	(\log G_1)^{(i)}(s,x) &= (-1)^i  (\log x)^{i-1} 	
		\frac{	\psi(x)-x + O_{i,\varepsilon}(x^{\suprho})}{x^s} \ll_{i,\varepsilon} x^{\suprho-s} (\log x)^{i+1}.
	\end{align}
\end{lem}
Littlewood proved that $\psi(x)-x =\Omega_{\pm}(\sqrt{x}\log \log \log x)$ \cite[Thm.~15.11]{MV}. Applying \eqref{eq:logGpsi2} if $\suprho=1/2$ and Lemma \ref{lem:osc} otherwise, we get
\begin{cor}
Fix $s>-2$. Then, as $x \to \infty$, $\log G_1(s,x) = \Omega_{\pm}(x^{\frac{1}{2}-s}\log \log x/\log x)$.
\end{cor}
In \S\ref{sec:intform} we prove the following illuminating representations of $\log G_1$.
\begin{lem}\label{lem:integralform} For $x> 2$ and $s > \suprho$,
\begin{equation}\label{eq:firstcase}
\log G_1(s,x)  = -\int_{x}^{\infty} \frac{d(\psi(t)-t)}{t^s \log t}.
\end{equation}
For $x> 2$ and $s>-2$,
\[		\log G_1(s,x)  = \int_{2^-}^{x} \frac{d(\psi(t)-t)}{t^s \log t}+\int_{1}^{2} \frac{t^{-2}-t^{-s}}{\log t}dt+\int_{2}^{\infty} \frac{dt}{t^2 \log t}-\log (\zeta(s)(s-1)).\]
\end{lem}
One can show $\log G_2(s,x) \to 0$ when $s-1/2 \ge \varepsilon$ and $x \to \infty$. In \S\ref{sec:G2} we prove more:
\begin{proposition}\label{prop:g2size}
Fix $i \ge 0$. 	For $x \ge 2$ and $1 \ge s\ge \varepsilon/\log x$ we have
	\begin{align}\label{eq:mtg2}
		(\log G_2)^{(i)}(s,x)&=( 1+O_{\varepsilon,i}( L(x)^{-c}+x^{-s}))\frac{(-2)^i}{2}\int_{\sqrt{x}}^{x}(\log t)^{i-1}t^{-2s}dt\\
		&\asymp_{\varepsilon,i} \frac{(-\log x)^{i}  x^{\max\{1-2s,\frac{1}{2}-s\}}}{\max\{1,|s-1/2|\log x\}}.
\end{align}	
For $1/4 \ge s \ge \varepsilon/\log x$ we have
	\begin{equation}\label{eq:smalls} \log G_2(s,x)=(1+O_{\varepsilon}(L(x)^{-c}))\int_{\sqrt{x}}^{x} (-\log(1-t^{-s})-t^{-s}) \frac{dt}{\log t}.
	\end{equation}
\end{proposition}
\subsection{Proof of Theorem \ref{thm:hildconjstronger}}
\subsubsection{First part}
We may assume $u \to \infty$ due to \eqref{eq:dickman}. By \eqref{eq:simp} it suffices to show $G(\alpha,y),G(\secondsad,y) \sim 1$. When $y \ge (\log x)^{1/(1-\suprho)+\varepsilon}$, the bounds for $\log G_1$ and $\log G_2$ given in \eqref{eq:logGpsi2} and \eqref{eq:mtg2} imply this (we simplify $y^{-\alpha}$ and $y^{-\secondsad}$ using Lemma \ref{lem:ytoapower}).
\subsubsection{Second part} 
Fix $\secondsad_0 \in (0,1)$. Given $y \ge 2$ there is a unique $x=x(y)$ with $\secondsad(x,y)=\secondsad_0$. Using \eqref{eq:betaderiv} we see that $x$ is determined by 
\[  (y^{1-\secondsad_0}-1)/(1-\secondsad_0) = \log x,\]
and in particular $y=(\log x)^{1/(1-\secondsad_0)+o(1)}$. Applying \eqref{eq:simp} with our $x=x(y)$ (so that $\secondsad=\secondsad_0$),
\begin{equation}\label{eq:corappmin}
	\Psi(x,y)\ll x \rho(u)  \exp(\log G_1(\secondsad_0,y)+\log G_2(\secondsad_0,y)).
\end{equation}
For our fixed $\secondsad_0$, 
\begin{equation}\label{eq:2ndmainthm}
\log G_1 (\secondsad_0,y) = \Omega_{-} (y^{\suprho-\secondsad_0-\varepsilon})
\end{equation}
by Lemma \ref{lem:osc}. If $\secondsad_0 \in (1-\suprho,\suprho)$, the $\Omega_{-}$ result follows from \eqref{eq:corappmin}, \eqref{eq:2ndmainthm} and the bound for $\log G_2$ given in \eqref{eq:mtg2}.
We now prove the $\Omega_{+}$ result. Given $y \ge 2$ and fixed $\alpha_0 \in (0,1)$ there is a unique $x=x(y)$ with $\alpha(x,y)=\alpha_0$. Using \eqref{eq:alphaderiv} we see that $x$ is determined by 
\[ -\zeta'(\alpha_0,y)/\zeta(\alpha_0,y)=\sum_{p \le y}\log p/(p^{\alpha_0}-1)=\log x.\]
By \eqref{eq:alphasize}, $ y=(\log x)^{1/(1-\alpha_0)+o(1)}$. Applying \eqref{eq:simp} with our $x=x(y)$ (so that $\alpha=\alpha_0$),
\begin{equation}\label{eq:corappmax}
	\Psi(x,y) \gg x\rho(u) \exp(\log G_1(\alpha_0,y)+\log G_2(\alpha_0,y)) \ge \exp(\log G_1(\alpha_0,y)).
\end{equation}
For our fixed $\alpha_0$,
\begin{equation}\label{eq:2ndmaththm2}
 \log G_1(\alpha_0,y) = \Omega_{+} (y^{\suprho-\alpha_0-\varepsilon})
\end{equation}
by Lemma \ref{lem:osc}. If $\alpha_0 \in (0,\suprho)$, the $\Omega_{+}$ result follows from \eqref{eq:corappmax} and \eqref{eq:2ndmaththm2}.
\subsubsection{Third part}
Let $s \in \{\alpha,\secondsad\}$. We use \eqref{eq:logGpsi2} and \eqref{eq:mtg2} to see that $\log G_1(s,y) \ll y^{\suprho-s}\log y$ and $\log G_2(s,y) \asymp_{\varepsilon} y^{1-2s}/\log y \asymp_{\varepsilon} \log^2 x/(y \log y)$ whenever $(\log x)^{1/\suprho-\varepsilon} \ge y \ge 2 \log x$ and $x \ge C$. We apply these estimates to
\[  (1+o(1))K(\alpha-1) G(\alpha,y) B(x,y)\le \frac{\Psi(x,y)}{x\rho(u)} \le (1+o(1))K(\secondsad-1) G(\secondsad,y) B(x,y)\frac{\secondsad}{\alpha}\]
which follows from Lemma \ref{lem:first}. The logarithms of $B(x,y)$, $K(s-1)$ and $\secondsad/\alpha$ are negligible: they are $\ll_{\varepsilon} \log \log y$ as seen from \eqref{eq:phi2}, \eqref{eq:derivxi} (for $B$) and \eqref{eq:alphasize}, \eqref{eq:sigmasymp} (for $K$ and $\secondsad/\alpha$).
\begin{remark}\label{rem:omega}
Estimates  \eqref{eq:corappmax} and \eqref{eq:2ndmaththm2} show that the $\Omega_{+}$ case of \eqref{eq:omegaplus} can be replaced with
\[ \Psi(x,y) \gg x \rho(u) \exp(\Omega_{+}(y^{\suprho+A^{-1}-1-\varepsilon}) + \log G_2(1-A^{-1},y)) \]
which holds for \textit{any} $A \in (1,1/(1-\suprho))$. In proving the $\Omega_{-}$ case of \eqref{eq:omegaplus} one may avoid using \eqref{eq:HTsaddle} by utilizing Rankin's bound $\Psi(x,y) \le x^{\alpha}\zeta(\alpha,y)$. 
\end{remark}
\section{New estimates and their proofs}
\begin{cor}\label{cor:strip}
Suppose $\suprho<1$. If $x \ge y \ge (\log x)^{\frac{1}{2}\max\{ 3, (1-\suprho)^{-1}\}+\varepsilon}$ then, as $x \to \infty$,
	\begin{equation}\label{eq:before32}
	\Psi(x,y)\sim x\rho(u)K(\secondsad-1) G(\secondsad,y) \sim x\rho(u)K(\alpha-1)G(\alpha,y).
\end{equation}
\end{cor}
If $\suprho=3/4$ then Corollary \ref{cor:strip} implies that, as $x \to \infty$, \eqref{eq:before32} holds for $y \ge (\log x)^{2+\varepsilon}$. Under RH, \eqref{eq:before32} holds 
for $x \ge y \ge (\log x)^{3/2+\varepsilon}$. A different behavior emerges once $y\asymp (\log x)^{3/2} (\log \log x)^{-1/2}$:
\begin{cor}\label{cor:phase}
Assume $RH$. Suppose $x \ge C_{\varepsilon}$. If $\varepsilon\log x\le y \le (\log x)^{2-\varepsilon}$ then
\[  \Psi(x,y) =x\rho(u) K(\alpha-1)G(\alpha,y) \exp\big(\Theta_{\varepsilon}\big( \frac{\log^3 x}{y^2 \log y}\big)\big).\]
If $(1+\varepsilon)\log x\le y \le (\log x)^{2-\varepsilon}$ then
\[\Psi(x,y) =x\rho(u) K(\secondsad-1)G(\secondsad,y)\exp\big(-\Theta_{\varepsilon}\big( \frac{\log^3 x}{y^2 \log y}\big)\big).\]
\end{cor}
We do not explore this, but the contribution of the $k=3$ term (i.e.~cubes) to $\log G_2(\alpha,y)$ and $\log G_2(\secondsad,y)$ also undergoes a phase transition and is of size $\asymp_{\varepsilon}\log^3 x/(y^2 \log y)$ once $y \le (\log x)^{3/2-\varepsilon}$.

A classical estimate of Hildebrand and Tenenbaum \cite[Thm.~2]{HildebrandTenenbaum1986} states that
\begin{equation}\label{eq:HTineq}
\log \left(\frac{\Psi(x,y)}{x}\right) =  (1 + O_{\varepsilon}(  \exp(-(\log y)^{\frac{3}{5}-\varepsilon}) ))\log \rho(u)+O_{\varepsilon}\left( \frac{\log (u+1)}{\log y}  \right)
\end{equation}
holds for $x \ge y \ge (\log x)^{1+\varepsilon}$ (cf.~\cite[Thm.~III.5.21]{Tenenbaum2015}). 
We offer an improvement in terms of range and error.
\begin{cor}\label{cor:ineq}
Suppose $x \ge C_{\varepsilon}$. For $x \ge y \ge \log x \cdot  L(\log x)^{\varepsilon}$,
\begin{equation}\label{eq:intermsofG122}
	\log \left(\frac{\Psi(x,y)}{xK(\alpha-1)}\right) =  (1+ O_{\varepsilon}( L(y)^{-c_{\varepsilon}}))\log \rho(u) +O_{\varepsilon}\left(\frac{1}{ \log x}\right),
\end{equation}
and the same holds with $K(\alpha-1)$ replaced by $K(\secondsad-1)$.
For $\log x \cdot L(\log x)^c \ge y \ge  \varepsilon\log x$,
\begin{equation}\label{eq:intermsofG12small}
	\log \left(\frac{\Psi(x,y)}{x\rho(u) }\right) =  \left(1+\Theta_{\varepsilon}\left( \frac{\log x}{y}\right)\right)\log G_2(\alpha,y).
\end{equation}
\end{cor}
We zoom in on the behavior at $y\asymp \log x$, which was considered by Erd\H{o}s \cite{Erdos1963} (who studied $y=\log x$), Erd\H{o}s and van Lint \cite{Erdos1966}, de Bruijn \cite{debruijn1966} and Granville \cite{Granville1989}. In \cite{Granville1989} it is explained why ``the real difficulty lies in this range''.
	\begin{cor}\label{cor:t}
	Suppose $x \ge C_{\varepsilon}$. For $\varepsilon \log x \le y \le \varepsilon^{-1}\log x$ we have
		\begin{equation}\label{eq:logtalpha}
			\log \left( \frac{\Psi(x,y)}{x\rho(u)}\right) =(1+O_{\varepsilon}(L(y)^{-c}))\int_{\sqrt{y}}^{y} (-\log(1-v^{-\alpha})-v^{-\alpha}) \frac{dv}{\log v} + f(\alpha)-f(\secondsad).
		\end{equation}
		Setting $t:=y/\log x$,
		\begin{align}
			&0<f(\alpha)-f(\secondsad) =u (\log (1+t^{-1}) - (1+t^{-1})^{-1}) + O_{\varepsilon}\big( \frac{y}{\log^2 y}\big),\\
			&0<\int_{\sqrt{y}}^{y} (-\log(1-v^{-\alpha})-v^{-\alpha}) \frac{dv}{\log v}= \frac{y}{\log y} (\log (1+t^{-1})-(1+t)^{-1}) + O_{\varepsilon}\big( \frac{y}{\log^2 y}\big).
		\end{align}
	\end{cor}
For example, if $y=\log x$ then both $f(\alpha)-f(\secondsad)$ and the integral in \eqref{eq:logtalpha} are $\sim u(\log (2) - \tfrac{1}{2})$.
\begin{remark}
If $y \le \log x$ then $\Psi(x,y) \le x^{1/\log y}\zeta(1/\log y, y)= e^{O(u)}$. Since de Bruijn showed $\rho(u)=e^{-u \log(u \log u)+O(u)}$ \cite{debruijn19512}, it follows that
\[ \log (\Psi(x,y)/(x\rho(u))) = u (\log (u (\log u)/	y)+ O(1))\]
for $y \le \log x$. In particular $ \log (\Psi(x,y)/(x\rho(u)) \sim u \log ((\log x)/y)$ as $y/\log x \to 0$.
\end{remark}
Let
\[\bddfunc(s):=(\log( \zeta(s)(s-1)))' \asymp 1 \]
for $s \in [0,1]$. Recall $f''(\secondsad)=(\log y)^2 I''(\xi(u))$ and $g''(\alpha)=\phi_2(\alpha,y)$ are estimated in \eqref{eq:derivxi} and \eqref{eq:phi2}, respectively, and are $\asymp_{\varepsilon} (\log x)(\log y)$ when $y \ge \varepsilon\log x$. The following technical lemma is proved in \S\ref{sec:tech} and is used in the proofs of Corollaries \ref{cor:strip}--\ref{cor:t}.
\begin{lem}\label{lem:technical}Suppose $x \ge C_{\varepsilon}$.
	\begin{enumerate}
		\item Define quantities $E$ and $E'$ via
		\begin{align}\label{eq:alphadifforder}
			\secondsad-\alpha =\frac{\frac{G'}{G}(\alpha,y)+\bddfunc(\alpha)}{f''(\secondsad)}(1+E)=\frac{\frac{G'}{G}(\secondsad,y)+\bddfunc(\secondsad)}{g''(\alpha)}(1+E').
		\end{align}
When $x \ge y \ge \varepsilon\log x$ we have $E \ll_{\varepsilon} (|\tfrac{G'}{G}(\alpha,y)|+1)/\log x$ and $1+E \asymp_{\varepsilon} 1$. When $x \ge y \ge (1+\varepsilon) \log x$ we have $E'\ll_{\varepsilon} (|\tfrac{G'}{G}(\secondsad,y)|+1)/\log x$ and $1+E' \asymp_{\varepsilon} 1$.
		\item Define quantities $E_1$ and $E_2$ via
		\begin{align}
			g(\secondsad)-g(\alpha) &=g''(\alpha)(\secondsad-\alpha)^2 (1 + E_1)/2 \text{ if }x \ge y \ge (1+\varepsilon)\log x,\\
			f(\alpha)-f(\secondsad) &=f''(\secondsad)(\secondsad-\alpha)^2(1 + E_2)/2 \text{ if }x \ge y \ge \varepsilon\log x.
		\end{align}
		Then $E_1,E_2 \ll_{\varepsilon} (|\tfrac{G'}{G}(\alpha,y)|+1)/\log x$ and $1+E_1,1+E_2\asymp_{\varepsilon} 1$.
		\item If $x \ge y \ge \varepsilon \log x$ then
		\begin{equation}\label{eq:Buncond}
			B(x,y) = 1+ O_{\varepsilon}\bigg(  \frac{|\frac{G'}{G}(\alpha,y)|+|(\log G)^{(2)}(\alpha,y)|(\log y)^{-1}+1}{\log x} \bigg).
		\end{equation}
		Let $h(t):= (1+t^{-1})^{-1/2}$. For $x \ge y \ge 2$,
		\begin{equation}\label{eq:BxyH}
			B(x,y) = h(y/\log x) ( 1 + O( (\log(1+\bar{u}))^{-1})).
		\end{equation}
	\end{enumerate}
\end{lem}
\begin{remark}\label{rem:firstpart}
Suppose $x \ge y \ge (\log x)^{1/(1-\suprho)+\varepsilon}$ and $x \ge C_{\varepsilon}$. From Lemma \ref{lem:technical}, \eqref{eq:logGpsi2} and \eqref{eq:mtg2}, the logarithms of $G(\alpha,y)$, $G(\secondsad,y)$, $B(x,y)$ and $\secondsad/\alpha$ are 
\[\ll_{\varepsilon} \frac{1}{\log x} +\frac{(\log x)(\log \log x)}{y^{1-\suprho}} .\]
Thus, from Lemma \ref{lem:first} with the error term supplied by Proposition \ref{prop:sharpen},
\begin{equation}\label{eq:impfirstpart}
\Psi(x,y)= x \rho(u)  K(s-1) \left(1+O_{\varepsilon}\left(\frac{1}{\log x} + \frac{(\log x)(\log \log x)}{y^{1-\suprho}}\right)\right)
\end{equation}
for $s \in \{\alpha,\secondsad\}$. 
This gives a stronger version of the first part of Theorem \ref{thm:hildconjstronger}.
\end{remark}
\subsection{Proof of Corollary \ref{cor:strip}}
Our starting point is Lemma \ref{lem:first} with the error term supplied by Proposition \ref{prop:sharpen}. Since $B(x,y) \sim  1$ (by \eqref{eq:Buncond}) and $\secondsad/\alpha \sim 1$ when $y\ge (\log x)^{1+\varepsilon}$, it suffices to show that $g(\alpha)-g(\secondsad)$ and $f(\alpha)-f(\secondsad)$ are $o(1)$ as $x \to \infty$ in the considered range. By Lemma \ref{lem:technical},
	\begin{equation}\label{eq:difford}
		g(\secondsad)-g(\alpha), f(\alpha)-f(\secondsad)\asymp_{\varepsilon}  (\log x)^{-1}(\log y)^{-1} (\tfrac{G'}{G}(\alpha,y)+\bddfunc(\alpha))^2.
	\end{equation}
	By \eqref{eq:logGpsi2} and \eqref{eq:mtg2} with $i=1$,
	\begin{equation}\label{eq:lemmaswith1}
		\begin{split}
		G_1'(\alpha,y)/G_1(\alpha,y) &\ll y^{\suprho-\alpha} (\log y)^2,\\
		-G'_2(\alpha,y)/G_2(\alpha,y)  &\asymp_{\varepsilon} \int_{\sqrt{y}}^{y} t^{-2\alpha}dt \asymp \frac{y^{\max\left\{ 1-2\alpha,\frac{1}{2}-\alpha\right\}}\log y}{\max\{1,|\alpha-1/2|\log y\}} \ll (y^{1-2\alpha}+1)\log y.
	\end{split}
\end{equation}
	By Lemma \ref{lem:ytoapower}, these estimates give the result. The exponents $(1-\suprho)^{-1}/2$ and $3/2$ in the corollary arise from our bounds for $G'_1/G_1$ and $G'_2/G_2$, respectively.
\subsection{Proof of Corollary \ref{cor:phase}}
Our starting point is Lemma \ref{lem:first}. In the considered ranges, $\alpha,\secondsad \le 1/2-c_{\varepsilon}$ by \eqref{eq:alphasize} and Lemma \ref{lem:sigmastuff}. According to Lemma \ref{lem:technical}, and \eqref{eq:logGpsi2} and \eqref{eq:mtg2} with $i=1,2$, RH implies in our range that
	\[ \frac{\secondsad}{\alpha}-1, B(x,y)-1 \ll_{\varepsilon}\frac{\log^2 y}{\sqrt{y}}+ \frac{\log x}{y}  = o\left( \frac{\log^3 x}{y^2 \log y}\right).\]
	By \eqref{eq:difford} and \eqref{eq:lemmaswith1}, $g(\secondsad)-g(\alpha)$ and $f(\alpha)-f(\secondsad)$ are $\asymp_{\varepsilon}\log^3 x/(y^2 \log y)$.
\subsection{Proof of Corollary \ref{cor:ineq}}
Suppose $y \ge \varepsilon\log x$. Taking logarithms in the first part of Lemma \ref{lem:first} and using the error term supplied by Proposition \ref{prop:sharpen} we see
\begin{align}
	\log \left( \frac{\Psi(x,y)}{x \rho(u)K(\alpha-1)}\right) &=  \log G(\alpha,y)+f(\alpha)-f(\secondsad)+ \log B(x,y)+ O\bigg( \frac{1}{\alpha \log x}\bigg).
\end{align}
Recall $G = G_1 G_2$. By \eqref{eq:zerofree} with $i=0$ and Lemma \ref{lem:ytoapower},
\[\log G_1(\alpha,y) \ll y^{1-\alpha} L(y)^{-c} \asymp_{\varepsilon}  u \log (u+1) L(y)^{-c}. \]
The term $\log G_2(\alpha,y)$ is studied in \eqref{eq:mtg2}. The terms $\log B(x,y)$ and $f(\alpha)-f(\secondsad)$ are estimated in Lemma \ref{lem:technical} in terms of $G$. By \eqref{eq:zerofree} and \eqref{eq:mtg2} we find
\begin{align} 
\log B(x,y) &\ll_{\varepsilon} \frac{\log G_2(\alpha,y)}{u}  + L(y)^{-c} + \frac{1}{\log x},\\
f(\alpha)-f(\secondsad) &=\Theta_{\varepsilon} \left( \frac{\log^2 G_2(\alpha,y)}{u}\right)+ O_{\varepsilon}\left(\frac{\log G_2(\alpha,y)}{\min\{\log x,L(y)^c\}}+\frac{\log x}{L(y)^c} + \frac{1}{(\log x)(\log y)}\right).
\end{align}
Using \eqref{eq:mtg2} this gives \eqref{eq:intermsofG122} and \eqref{eq:intermsofG12small}. By \eqref{eq:alphadifforder}, \eqref{eq:intermsofG122} holds with $\secondsad$ instead of $\alpha$ too. 
\subsection{Proof of Corollary \ref{cor:t}}
We take logarithms in the first part of Lemma \ref{lem:first}. We have
	\[K(\alpha-1) \asymp \alpha^{-1} \asymp_{\varepsilon} \log y\]
	by \eqref{eq:alphasize} and
	$\log B(x,y) = O_{\varepsilon}(1)$ by \eqref{eq:BxyH}. Hence
	\[ \log \left( \frac{\Psi(x,y)}{x\rho(u)}\right) =\log G_1(\alpha,y)+\log G_2(\alpha,y) +f(\alpha)-f(\secondsad) + O_{\varepsilon}(\log \log y).\]
	By \eqref{eq:zerofree} we have
	\[ \log G_1(\alpha,y) \ll y^{1-\alpha}L(y)^{-c} \asymp_{\varepsilon} y L(y)^{-c}.\]
By \eqref{eq:smalls}, the quantity $\log G_2(\alpha,y)$ is equal to the integral in \eqref{eq:logtalpha}, finishing the proof of \eqref{eq:logtalpha}. To study $f(\alpha)-f(\secondsad)$ we use \cite[Eq.~(7.6)]{HildebrandTenenbaum1986} and \cite[p.~88]{Alladi1987}, which say
\begin{equation}\label{eq:alphabetalogscale}
\alpha=\frac{\log\big( 1 + \frac{y}{\log x}\big)}{\log y}\bigg(1+O\bigg(\frac{1}{\log y}\bigg)\bigg),\quad \xi=\log(u\log u) + \frac{\log \log u}{\log u}+O\bigg(\frac{1}{\log u}\bigg), 
\end{equation}
to deduce that
	\begin{align}\label{eq:diffscalelog}
	\alpha - \secondsad = \frac{\log \left(1+\frac{\log x}{y}\right)}{\log y}+ O_{\varepsilon}\left( \frac{1}{\log^2 y}\right)>0.
	\end{align}
By the definition of $f$,
	\begin{equation}\label{eq:difff} f(\alpha)-f(\secondsad) = (\alpha-\secondsad)\log x +\int_{\xi}^{(1-\alpha)\log y} \frac{e^t-1}{t}dt.
	\end{equation}
	The term $\alpha-\secondsad$ was just estimated. We estimate the integral in \eqref{eq:difff} using \eqref{eq:alphabetalogscale}--\eqref{eq:diffscalelog}:
	\begin{align}
		\int_{\xi}^{(1-\alpha)\log y} \frac{e^t-1}{t}dt &= \frac{1}{(1-\alpha)\log y} \bigg( 1+  O_{\varepsilon}\bigg( \frac{1}{\log y}\bigg)\bigg)  \int_{\xi}^{(1-\alpha)\log y} (e^t-1) dt \\
		&=\frac{1}{\log y } \bigg( 1+  O_{\varepsilon}\bigg(\frac{1}{ \log y}\bigg)\bigg)  (e^t-t) \Big|^{t=(1-\alpha)\log y}_{t=\xi}\\
		&= \frac{y^{1-\alpha}-e^{\xi}}{\log y } \bigg( 1+  O_{\varepsilon}\bigg(\frac{1}{ \log y}\bigg)\bigg)
	\end{align}
	and
	\begin{align}
		y^{1-\alpha}-e^{\xi} &= e^{\xi} ( e^{\log y(\secondsad-\alpha)}-1) = -e^{\xi} \bigg(1+ \frac{\log x}{y} \bigg)^{-1}\bigg(1+O_{\varepsilon}\bigg( \frac{1}{\log y}\bigg)\bigg)\\
		&=-\log x\bigg(1+ \frac{\log x}{y} \bigg)^{-1}\bigg(1+O_{\varepsilon}\bigg( \frac{1}{\log y}\bigg)\bigg).
	\end{align}
	To estimate the integral in \eqref{eq:logtalpha} we integrate by parts, obtaining it equals
	\begin{align}
		\frac{y}{\log y} (-\log(1-y^{-\alpha})-y^{-\alpha}) \bigg(1+O_{\varepsilon}\bigg(\frac{1}{\log y}\bigg)\bigg).
	\end{align}
	We use \eqref{eq:alphabetalogscale} to write
	\[ y^{-\alpha} = \bigg(1+\frac{y}{\log x}\bigg)^{-1} \bigg(1+O_{\varepsilon}\bigg(\frac{1}{\log y}\bigg)\bigg)\]
	which gives the desired approximation for the integral.
\section{Pomerance's question}
\subsection{Proof of first part of Theorem \ref{thm:pom}}
Estimate \eqref{eq:granvilleextended} follows from \eqref{eq:intermsofG12small} upon simplifying $\log G_2$ using \eqref{eq:mtg2}. Estimate \eqref{eq:smallu} is in \eqref{eq:intermsofG122} if $x \ge C$ and $x^c \ge y$ since, by Taylor-approximating $K$ at $0$,
\begin{equation}\label{eq:Kbnd}
K(\secondsad-1) =1+ \Theta\bigg(\frac{\log(u+1)}{\log y}\bigg)
\end{equation}
for $x^c \ge y \ge (\log x)^2$. 
For $x^c \le y \le (1-\varepsilon)x$, we recall de Bruijn's approximation $\Lambda(x,y)$ \cite{debruijn1951}, defined as \[ \Lambda(x,y) := x\int_{\RR} \rho(u-v)d(\lfloor y^v \rfloor/y^v)\]
for $x \not\in \ZZ$. Integrating the definition by parts gives
\begin{equation}\label{eq:intbyparts}
	\Lambda(x,y)=x\rho(u)-\{ x\}+x\int_{0}^{u-1}(-\rho'(u-v))\{y^v\}y^{-v}dv.
\end{equation}
De Bruijn \cite{debruijn1951} proved $\Psi(x,y) = \Lambda(x,y)+O(x\exp(-C\log^{1/2} x))$ for $u=O(1)$.
Suppose $x^c \le y \le (1-\varepsilon)x$. Then the contribution of $0\le v \le c_{\varepsilon}/\log y$ to the integral in the right-hand side of \eqref{eq:intbyparts} is $\ge c_{\varepsilon}/\log x$, which yields \eqref{eq:smallu} in the remaining range.
\subsection{Proof of second part of Theorem \ref{thm:pom}}
We assume RH holds. Estimate \eqref{eq:granvilleextended} in $2\log x \le y \le (\log x)^{2-\varepsilon}$ follows from the third part of Theorem \ref{thm:hildconjstronger} and for $\varepsilon\log x \le y < 2 \log x$ it is in the first part of Theorem \ref{thm:pom}. In $x(1-\varepsilon) \ge y \ge x^c$, \eqref{eq:smallu} holds by the first part of Theorem \ref{thm:pom}. In $x^c\ge y \ge (\log x)^{2+\varepsilon}$, \eqref{eq:smallu} follows from \eqref{eq:Kbnd} and \eqref{eq:impfirstpart} since $K$ is strictly decreasing. 
It remains to deal with $(\log x)^{2-\varepsilon} \le y \le (\log x)^{2+\varepsilon}$.
In this range, Corollary \ref{cor:strip} tells us
\[ \Psi(x,y) = (1+o(1))x \rho(u) K(\secondsad-1) G(\secondsad,y), \qquad x \to \infty. \]
The asymptotic estimates for $\log G_1$ and $\log G_2$ given in \eqref{eq:logGpsi2} and \eqref{eq:mtg2} respectively, yield
\begin{equation}\label{eq:logGpsi}
	\log G(\secondsad,y) = \frac{1+o(1)}{2}\int_{\sqrt{y}}^{y} \frac{dt}{t^{2\secondsad}\log t}dt + \frac{\psi(y)-y}{y^{\secondsad}\log y}+ O\bigg( \frac{y^{\frac{1}{2}-\secondsad}}{\log y}  \bigg).
\end{equation}
We want
\begin{equation}\label{eq:wantpos}\log K(\secondsad-1) + \frac{y^{\frac{1}{2}-\secondsad}}{\log y} \left( \frac{\psi(y)-y}{\sqrt{y}}+ O(1)\right) +\frac{1+o(1)}{2}\int_{\sqrt{y}}^{y} \frac{dt}{t^{2\secondsad}\log t}dt+ o(1)
\end{equation}
to be non-negative. We show that if 
\begin{equation}\label{eq:suff}
	\liminf_{y \to \infty} \frac{\psi(y)-y}{\sqrt{y}\log y}  > L
\end{equation}
holds and $x \ge C$ then \eqref{eq:wantpos} is non-negative for $(\log x)^{3/2}\le y \le (\log x)^{3}$. We consider three cases. If $(2\secondsad-1)\log y \ge C$ then \eqref{eq:suff} implies that \eqref{eq:wantpos} is positive if $x \ge C$. If $(2\secondsad-1)\log y \le -C$ then \eqref{eq:wantpos} is positive by \eqref{eq:suff} and additionally invoking \eqref{eq:mtg2} to estimate the integral in \eqref{eq:wantpos}. The most delicate range is $(2\secondsad-1)\log y= O(1)$. Here $K(\secondsad-1) \sim K(-1/2)$. Set
\[ \secondsad= \frac{1}{2} + \frac{v}{\log y}\]
so that $v$ is bounded. We express $\log(\Psi(x,y)/(x\rho(u))$ as a function of $y$ and $v$:
\begin{equation}\label{eq:funcv}
\log\left( \frac{\Psi(x,y)}{x\rho(u)}\right) = \log K(-1/2) + e^{-v} \frac{\psi(y)-y}{\sqrt{y}\log y}  + \frac{1}{2} \int_{v}^{2v} \frac{e^{-r}}{r}dr+ o(1).
\end{equation}
If \eqref{eq:suff} holds, we find by the definition of $L$ that the right-hand side of \eqref{eq:funcv} is $> c$ for some $c>0$, if $y$ is sufficiently large. If instead
\begin{equation}
	\liminf_{y \to \infty} \frac{\psi(y)-y}{\sqrt{y}\log y}  < L
\end{equation}
then, by definition, we can find $v\in \RR$ such that if $\secondsad=1/2+v/\log y$ then the right-hand side of \eqref{eq:funcv} is $<-c$ for some $c>0$, if $y$ is sufficiently large. This finishes the proof.
\section{Study of \texorpdfstring{$G_1$}{G1}}\label{sec:G1}
\subsection{Proof of Lemma \ref{lem:osc}}\label{sec:osc}
For our purposes, given a function $A$ its Mellin transform is
\[ \{\mathcal{M}A\}(s) = \int_{1}^{\infty} A(x)x^{-s-1}dx.\]
\begin{proposition}\label{prop:mellinlogg1}
	Let $T(x) := \int_{x}^{\infty}dt/(t^2 \log t)$, which decays like $(x\log x)^{-1}$ as $x \to \infty$ and blows up as $x\to 1^+$. 
	Fix $s_0 >-2$ and $a>0$. The function $\log G_1(s_0,x) + T(x^a)$ is defined at $x=1^+$ and
	\begin{equation}\label{eq:Mlogg1a}
		\{\mathcal{M}(\log G_1(s_0,x) -T(x^a))\}(s) = \frac{1}{s} \log \frac{\zeta(s+s_0)(s+s_0-1)}{\zeta(s_0)(s_0-1)(1+s/a)}
	\end{equation}
	holds for $\Re s > \max\{1-s_0,0\}$. This transform has analytic continuation to $\Re s > \max\{\suprho- s_0,-a\}$.
\end{proposition}
\begin{proof}
We require the following identity \cite[Lem.~2.2]{Diamond}:
	\begin{equation}\label{eq:DP}
		\log \log x + \gamma=\int_{1}^{x}\frac{1-v^{-1}}{v \log v}dv-\int_{x}^{\infty}\frac{dv}{v^2 \log v}, \qquad x>1.
	\end{equation}
Write $\log G_1(s_0,x)$ as $A_1(x)-(A_2(x)+A_3(x)+A_4(x))$ where
\begin{align}
A_1(x) &= \sum_{n \le x} \frac{\Lambda(n)}{n^{s_0} \log n}, \qquad A_2(x) = \log \log x + \gamma, \\
A_3(x) &= I((1-s_0)\log x), \qquad A_4(x) = \log(\zeta(s_0)(s_0-1)).
\end{align}
	By \cite[Thm.~1.3]{MV}, the Mellin transform of $x\mapsto \sum_{m \le x} h(m)$ is $s^{-1}\sum_{n=1}^{\infty}h(n)n^{-s}$, so
	\begin{equation}\label{eq:A1mellin}
		\{\mathcal{M}A_{1}\}(s) = s^{-1}  \log \zeta(s+s_0)
	\end{equation}
for $\Re s > \max\{1-s_0,0\}$. For any constant $b$ we have $\{ \mathcal{M}b\}(s)=s^{-1}b$ and so
\begin{equation}\label{eq:A4mellin}
	\{\mathcal{M}A_{4}\}(s) =s^{-1}\log (\zeta(s_0)(s_0-1)).
\end{equation}
	For $A_3$ first suppose $s_0< 1$. We shall use the  identity \cite[Eq.~(2.3)]{Diamond}
	\begin{equation}\label{eq:idendiamond} \log \frac{z+1}{z} = \int_{1}^{\infty} t^{-z} \frac{1-t^{-1}}{t \log t}dt, \qquad \Re z >0.
	\end{equation}
To verify \eqref{eq:idendiamond} we check that both sides have the same derivative and tend to $0$ when $\Re z \to \infty$. Applying \eqref{eq:idendiamond} with $z=(s_0+s-1)/(1-s_0)$ and substituting $t=x^{1-s_0}$ we obtain
	\begin{equation}\label{eq:mellina3} \log \frac{s}{s+s_0-1} = \int_{1}^{\infty} x^{-s} \frac{x^{1-s_0}-1}{x \log x}dx= s \int_{1}^{\infty}x^{-s-1} \int_{1}^{x} \frac{v^{1-s_0}-1}{v \log v}dv dx
	\end{equation}
	for $\Re s > \max\{1- s_0,0\}$, where in the last equality we integrated by parts.
	Substituting $v=e^{u/(1-s_0)}$ we find
	\begin{equation}\label{eq:A2mellin}
		\{\mathcal{M}A_{3}\}(s) =s^{-1}\log \frac{s}{s+s_0-1}.
	\end{equation}
If $s_0=1$ then $A_3 \equiv 0$ and \eqref{eq:A2mellin} still holds. If $s_0>1$, the left-hand side of \eqref{eq:mellina3} still agrees with its right-hand side by the uniqueness principle and so \eqref{eq:A2mellin} persists.
For $A_2$, we apply \eqref{eq:DP} with $x^a$ in place of $x$, obtaining
\[ A_2(x) =  \int_{1}^{x^a} \frac{1-t^{-1}}{t \log t}dt - \int_{x^a}^{\infty}\frac{dt}{t^2 \log t}-\log a =: \widetilde{A_{2}}(x)-T(x^a) -\log a\]
where $T$ is as in the statement of the proposition. We have $\{\mathcal{M} \log a\}(s) =s^{-1} \log a$. 
By \eqref{eq:mellina3} with $s_0=2$, $s/a$ in place of $s$ and the substitution $x=u^a$, we find
\[ \{\mathcal{M}\widetilde{A_{2}}\}(s)= s^{-1} \log \frac{s+a}{s}.\]
We now sum the Mellin transforms of $A_1$, $-A_2-T(x^a)=\log a-\widetilde{A_2}$, $-A_3$ and $-A_4$. 
\end{proof}
To establish Lemma \ref{lem:osc} fix $\varepsilon>0$ and $s>-2$. Suppose that $\log G_1(s,x)<x^{\suprho-s-\varepsilon}$ (resp.~$\log G_1(s,x)>-x^{\suprho-s-\varepsilon}$) for $x\ge C_{\varepsilon,s}$. Reach contradiction by applying Landau's Oscillation Theorem \cite[Lem.~15.1]{MV} to $A(x)=2x^{\suprho-s-\varepsilon}-(\log G_1(s,x)-T(x^{s+2}))$ (resp.~$A(x)=2x^{\suprho-s-\varepsilon}+\log G_1(s,x)-T(x^{s+2})$), an eventually positive function.
\subsection{Explicit formulas for \texorpdfstring{$G_1$}{G1}}
Given $x>0$ and $s \in \CC$ we let $S_1(x,s) := {\sum_{n \le x}}' \Lambda(n)n^{-s}$, where the prime on the summation indicates that if $x$ is a prime power, the last term of the sum should be multiplied by $1/2$. Landau \cite{Landau} established an explicit formula for $S_1(x,s)$ when $\zeta(s) \neq 0$.
In \S\ref{sec:appendix} we establish a truncated version of it, stated in the lemma below. We denote by $x'$ the prime power closest to $x$ not equal to $x$, and set $\langle x \rangle = |x-x'|$.
\begin{lem}\label{lem:truncatedpowerssec2}
Suppose $\zeta(s) \neq 0$. For $x \ge 4$ and $T \ge 2+|\Im s|$ we define $R_1(x,T,s)$ by
	\begin{equation}\label{eq:S1s2}
		S_1(x,s) = \frac{x^{1-s}}{1-s} - \frac{\zeta'(s)}{\zeta(s)}- \sum_{ |\Im (\rho -s)| \le T} \frac{x^{\rho-s}}{\rho-s} + \sum_{k=1}^{\infty} \frac{x^{-2k-s}}{2k+s} +R_1(x,T,s),
	\end{equation}
where the first sum is over non-trivial zeros $\rho$ of $\zeta$.	Then
	\begin{equation}\label{eq:R1s2}
		R_1(x,T,s) \ll (\log x) x'^{- \Re s}\min\left\{1,\frac{x}{T\langle x \rangle}\right\}+ \frac{\log^2 (xT)}{T} \big( 2^{|\Re s|}x^{1-\Re s} + \frac{2^{-\Re s}}{\log x}\big).
	\end{equation}
	If $s=1$ then the term $x^{1-s}/(1-s) - \zeta'(s)/\zeta(s)$ should be interpreted as $\log x - \gamma$.
\end{lem}
Next we need the identity \cite[p.~228]{Abramowitz} \begin{equation}\label{eq:idenclassic}
	I(-z)+\int_{0}^{\infty}\frac{e^{-z-t}}{z+t}dt+\gamma+\log z=0, \qquad z \in \CC \setminus (-\infty,0],
\end{equation}
where $\log z$ is chosen to be real-valued for $z>0$ (it is in fact equivalent to \eqref{eq:DP}).
\begin{cor}\label{cor:S2s2}
	Let $S_0(x,s) := {\sum_{n \le x}}' \Lambda(n)n^{-s}/\log n$.	Suppose $s \in \CC$ has $\zeta(s+t)\neq 0$ for $t \ge 0$. For $x \ge 4$ and $T \ge 2+|\Im s|$ we define $R_0(x,T,s)$ by
	\begin{align}
		S_0(x,s)&= I((1-s)\log x)+\gamma+\log \log x + \log(\zeta(s)(s-1)) \\
		& \quad-\sum_{ |\Im (\rho -s)|\le T}\int_{0}^{\infty} \frac{x^{\rho-s-t}}{\rho-s-t}dt+ \sum_{k=1}^{\infty} \int_{0}^{\infty} \frac{x^{-2k-s-t}}{2k+s+t}dt +R_0(x,T,s)
	\end{align}
	where $\log (\zeta(z)(z-1))$ is real-valued for $z>1$ and defined on $\{ s+t: t \ge 0\}$. 
	Then
	\begin{equation}
		\label{eq:Rs2}
		R_0(x,T,s) \ll x'^{-\Re s}\min\left\{1,\frac{x}{T\langle x \rangle}\right\}+ \frac{\log^2 (xT)}{T\log x} (2^{|\Re s|}x^{1-\Re s} +2^{-\Re s}).
	\end{equation}
\end{cor}
\begin{proof}
We start with an integral identity (cf.~\cite[Prop.~1]{Saias1989}):
	\[ S_0(x,s) =\int_{0}^{\infty} {\sum_{n \le x}}' \frac{\Lambda(n)}{n^{s+t}}dt = \int_{0}^{\infty} S_1(x,s+t)dt.\]
	We integrate both sides of \eqref{eq:S1s2} along $\{ s+t: t \ge 0\}$. We may interchange sum and integral because the sum over $\rho$ is finite, while the integral of the $k$-sum converges absolutely. It remains to show
	\[  \lim_{A \to \infty}\int_{0}^{A}\bigg( \frac{x^{1-s-t}-1}{1-s-t} - \frac{(\zeta(s+t)(s+t-1))'}{\zeta(s+t)(s+t-1)}\bigg)dt=I((1-s)\log x)+ \gamma+\log ( \zeta(s)(s-1)\log x).\]
The substitution $(1-s-t)\log x=v$ allows us to evaluate the integral as \[ I((1-s)\log x)-I((1-s-A)\log x) - \log (\zeta(s+A)(s+A-1))+\log (\zeta(s)(s-1)).\]
	The required limit follows from \eqref{eq:idenclassic} with $z=(s+A-1)\log x$ ($A \to \infty$).
\end{proof}
From the definition of $\log G_1$, Lemma \ref{lem:truncatedpowerssec2} and Corollary \ref{cor:S2s2} we get
\begin{cor}\label{cor:logG1s2}
	Suppose $\zeta(s) \neq 0$. For $x \ge 4$ and $T \ge 2+|\Im s|$ we have, for $R_1$ estimated in \eqref{eq:R1s2},
	\begin{align}
		-(\log G_1)'(s,x) =  \frac{\mathbf{1}_{x \in \NN}\Lambda(x)}{2x^s} 	-\sum_{|\Im (\rho-s) |\le T}\frac{x^{\rho-s}}{\rho-s} + \sum_{k=1}^{\infty}  \frac{x^{-2k-s}}{2k+s}+R_1(x,T,s).
	\end{align}	
	Suppose further that $\zeta(s+t)\neq 0$ for $t \ge 0$. We have, for $R_0$ estimated in \eqref{eq:Rs2},
	\begin{align}
		\log G_1(s,x) =  \frac{\mathbf{1}_{x \in \NN}\Lambda(x)}{2x^s \log x} 	-\sum_{|\Im (\rho-s) |\le T}\int_{0}^{\infty} \frac{x^{\rho-s-t}}{\rho-s-t}dt + \sum_{k=1}^{\infty} \int_{0}^{\infty} \frac{x^{-2k-s-t}}{2k+s+t}dt +R_0(x,T,s).
	\end{align}
\end{cor}
Applying Cauchy's integral formula to the first part of Corollary \ref{cor:logG1s2} we get
\begin{cor}\label{cor:S1is2}
	Fix $i \ge 2$ and $a >0$. Let $x \ge 4$. Suppose that $\zeta(z)\neq 0$ for $|z-s|\le a/\log x$. Then for $T\ge 2+|\Im s|+a/\log x$ we have 	
	\begin{align}\label{eq:S1i}
		-(\log G_1)^{(i)}(s,x) &=  \frac{\mathbf{1}_{x \in \NN}\Lambda(x)(-\log x)^{i-1}}{2x^s}- \sum_{ |\Im (\rho-s)| \le T} \frac{\partial^{i-1}}{\partial s^{i-1}} \frac{x^{\rho-s}}{\rho-s} +\frac{\partial^{i-1}}{\partial s^{i-1}}\sum_{k=1}^{\infty} \frac{x^{-2k-s}}{2k+s} +R_{i}
	\end{align}
for $R_i=R_i(x,T,s)$ satisfying
\begin{align}
R_{i}(x,T,s) &\ll_{i,a} (\log x)^{i} x'^{-\Re s}\min\left\{1,\frac{x}{T\langle x \rangle}\right\}+ \frac{\log^2 (xT)(\log x)^{i-1}}{T} \big(2^{|\Re s|} x^{1-\Re s}+\frac{2^{-\Re s}}{\log x}\big).
	\end{align}
\end{cor}
\subsection{Proof of Lemma \ref{lem:logg1size}}\label{sec:estimates}
We explain $i=0$; general $i$ is similar. Under our assumptions,
\[-\int_{0}^{\infty}\frac{ x^{\rho-s-t}}{\rho-s-t}dt=-\frac{1}{\log x}\frac{x^{\rho-s}}{\rho-s} \left(1+O_{\varepsilon}\left(\frac{1}{|\rho|\log x}\right)\right)\]
for every zero of $\zeta$. 
For the first estimate we apply Corollary \ref{cor:logG1s2} with $T=L(x)^c$ and use the Vinogradov--Korobov zero-free region to bound the sum over the zeros. For the second estimate we apply Corollary \ref{cor:logG1s2} with $T=x$ and write the sum over zeros as 
\[- \sum_{ |\Im \rho| \le x} \int_{0}^{\infty} \frac{x^{\rho-s-t}}{\rho-s-t}dt = -\frac{x^{-s}}{\log x}\sum_{|\rho|\le x} \frac{x^{\rho}}{\rho}(1+O_{\varepsilon}(|\rho|^{-1})).\]
Since $\sum_{\rho} 1/|\rho|^2$ converges \cite[Thm.~10.13]{MV}, $\sum_{\rho|\le x} |x^{\rho}/\rho^2| \ll x^{\suprho}$. By Lemma \ref{lem:truncatedpowerssec2} with $s=0$, $-\sum_{|\rho|\le x} x^{\rho}/\rho =\psi(x)-x+ O(\log^2 x)$ and so
\[- \sum_{ |\Im \rho| \le x} \int_{0}^{\infty} \frac{x^{\rho-s-t}}{\rho-s-t}dt = x^{-s}(\psi(x)-x+O_{\varepsilon}(x^{\suprho}))\ll_{\varepsilon} x^{-s} x^{\suprho}\log^2 x\]
where the last inequality is Exercise 1 in \cite[p.~430]{MV}.
\subsection{Proof of Lemma \ref{lem:integralform}}\label{sec:intform}
By definition,
\begin{equation}\label{eq:def} \sum_{n \le x} \frac{\Lambda(n)}{n^s \log n} = \int_{2^-}^{x} \frac{d\psi(t)}{t^s \log t}.
\end{equation}
Through the change of variables $v\mapsto (1-s)\log t$ ($s \neq 0$ fixed),
\begin{equation}\label{eq:FTC}
I((1-s)\log x) = \int_{1}^{x} \frac{t^{1-s}-1}{t\log t}dt,
\end{equation}
which is also true for $s=1$. The second part follows from \eqref{eq:def}, \eqref{eq:FTC} and \eqref{eq:DP}. For \eqref{eq:firstcase} observe that both sides of are real-analytic functions for $s>\suprho$ (since $\psi(x)=x+O(x^{\suprho}\log^2 x)$ \cite[p.~430]{MV}) so by the uniqueness principle it suffices to consider $s>1$. We use \eqref{eq:def}, \eqref{eq:FTC} and 
\[ \log \zeta(s) = \sum_{n \ge 1} \frac{\Lambda(n)}{n^s \log n} =\int_{2^-}^{\infty} \frac{d\psi(t)}{t^s \log t}\]
to find that 
\begin{align} \log G_1(s,x) &= -\int_{x}^{\infty}\frac{d(\psi(t)-t)}{t^s \log y} + H(s,x),\\
H(s,x) &:= \int_{1}^{x} \frac{1-t^{1-s}}{t \log t}dt - \int_{x}^{\infty} \frac{1}{t^s \log t} - \log ( (s-1)\log x)-\gamma.
\end{align}
The derivative of $H$ with respect to $s$ is $0$ and $H(2,x) \equiv 0$ by \eqref{eq:DP}, so $H \equiv 0$.
\section{Study of \texorpdfstring{$G_2$}{G2}: proof of Proposition \ref{prop:g2size}}\label{sec:G2}
We have $\log G_2 = \log G_{2,1} + \log G_{2,2}$ for
\begin{equation}
	\log G_{2,1}(s,x) =\sum_{\sqrt{x} <p \le x} \frac{p^{-2s}}{2}, \qquad
	\log G_{2,2}(s,x) =\sum_{k \ge 3}\sum_{x^{1/k} <p \le x} \frac{p^{-ks}}{k}.
\end{equation}
The PNT with error term shows, via integration by parts, that for $s \in [0,1]$ we have
\begin{align}\label{eq:log21}
	\log G_{2,1}(s,x)&=\frac{1+ O( L(x)^{-c})}{2}\int_{\sqrt{x}}^{x} \frac{dt}{t^{2s}\log t}.
\end{align}
For $x \neq 0$ let $ \mathrm{Ei}(x)$ be the exponential integral, to be understood in principal value sense:
\begin{equation}\label{eq:Ei}
	\mathrm{Ei} (x) =-\int_{-x}^{\infty}e^{-t}t^{-1}dt= \int_{-\infty}^{x}e^t t^{-1}dt=e^x x^{-1}(1+O(x^{-1})).
\end{equation}
By performing the change of variables $v=(1-2s)\log t$  in \eqref{eq:log21} we see that
\begin{align}
	\int_{\sqrt{x}}^{x} \frac{dt}{t^{2s}\log t} &= \mathrm{Ei}((1-2s)\log x)- \mathrm{Ei}( \big(\tfrac{1}{2}-s)\log x) \\
	& \sim 
	\begin{cases} \frac{x^{\frac{1}{2}-s}}{ \left(s-\frac{1}{2}\right)\log x} &\text{if }(2s-1)\log x \to \infty \\ \frac{x^{1-2s}}{ (1-2s)\log x}&\text{if }(2s-1)\log x \to -\infty\end{cases}\asymp \frac{x^{\max\{1-2s,\frac{1}{2}-s\}}}{\max\{1,|s-1/2|\log x\}}
\end{align}	
for $s \in [0,1]\setminus\{1/2\}$. When $s=1/2$ the integral is $\log 2$ since $(\log \log t)'=1/(t \log t)$.
\begin{lem}\label{lem:g22}
	For $1 \ge s \ge \varepsilon/\log x$ we have
	\[ \log G_{2,2}(s,x) \ll_{\varepsilon} \frac{x^{\max\{1-3s, \frac{1}{3}-s\}}}{\max\{1,|s-1/3|\log x\}}.\]
\end{lem}
\begin{proof}
If $x \ll_{\varepsilon} 1$ then $\log G_{2,2}(s,x) \ll_{\varepsilon} 1$ so we may assume $x\ge C_{\varepsilon}$.
In the same way we showed \eqref{eq:log21}, we find that the contribution of $k=3$ to $\log G_{2,2}(s,x)$ is acceptable, so we omit this case.
	We consider the contribution of $k\ge \max\{2/s,\log_2 x\}$ (base-$2$ logarithm) to $\log G_{2,2}$. For such $k$,
	\[ \sum_{x^{1/k}<p\le x} p^{-ks} \le 2^{-ks} + \sum_{p \ge 3} p^{-ks} \ll 2^{-ks} + \int_{2}^{\infty} t^{-ks}dt  \ll 2^{-ks}.\]
	Hence
	\begin{equation}\label{eq:largek} \sum_{k \ge \max\{2/s,\log_2 x\}}\sum_{x^{1/k}<p\le x} p^{-ks}/k \ll \sum_{k\ge \max\{2/s,\log_2 x\}} 2^{-ks}/k \ll x^{-s},
	\end{equation}
	which is negligible.
	It remains to consider the contribution of $4 \le k < \max\{2/s,\log_2 x\}$ to $\log G_{2,2}$. We show that primes $p \in (x^{1/4},x]$ have an acceptable contribution. The assumption $s \ge \varepsilon/\log x$ implies $1/(1-t^{-s}) \ll_{\varepsilon} 1$ when $t \ge x^{1/4}/2$, and so
	\[ \sum_{\max\{2/s,\log_2 x\} > k \ge 4} \sum_{x^{1/4}<p \le x} p^{-ks}/k\ll \sum_{k \ge 4} \int_{x^{1/4}/2}^{x} \frac{dt}{t^{ks}k\log t} \ll_{\varepsilon}  \int_{x^{1/4}/2}^{x} \frac{dt}{t^{4s}\log t} \]
	which is acceptable. For the primes $p \in (x^{1/k},x^{1/4}]$, the bound $\pi(y) \ll y/\log y$ shows
	\begin{equation}\label{eq:smallp}
		\sum_{\max\{2/s,\log_2x\} > k \ge 4} \sum_{x^{1/k}<p \le x^{1/4}} p^{-ks}/k\ll \sum_{\max\{2/s,\log_2x\} > k \ge 4} x^{\frac{1}{4}-s}/\log x \ll_{\varepsilon} x^{\frac{1}{4}-s},
	\end{equation}
where in the last inequality we used $s \ge \varepsilon/\log x$. This is an acceptable bound.
\end{proof}
Estimate \eqref{eq:mtg2} when $i=0$ is a direct consequence of \eqref{eq:log21} and Lemma \ref{lem:g22}; $i>0$ is similar and so is omitted. 
We now assume $1/4 \ge s \ge \varepsilon/\log x$ and establish \eqref{eq:smalls}. The contribution of $k \ge \max\{2/s,\log_2 x\}$ to $\log G_2$ is $\ll x^{-s}$ as in \eqref{eq:largek}. We now consider $2 \le k < \max\{2/s,\log_2 x\}$. If $x^{1/k} < p \le \sqrt{x}$ we get a contribution of $\ll_{\varepsilon} x^{1/2-s}$ similarly to \eqref{eq:smallp}. We handle $2 \le k < \max\{2/s,\log_2 x\}$ and $\sqrt{x}< p \le x$ by the PNT and integration by parts, obtaining a contribution of
	\begin{equation}\label{eq:truncatedlog}
(1+O_{\varepsilon}(L(x)^{-c}))\int_{\sqrt{x}}^{x} \sum_{2 \le k < \max\{2/s,\log_2 x\}} \frac{t^{-ks}}{k} \frac{dt}{\log t} . 
\end{equation}
	Since $t^{s} -1 \gg_{\varepsilon} 1$ when $t \in [\sqrt{x},x]$ we may extend the $k$-sum within the integral to the range $k \ge 2$ at a cost of $\ll_{\varepsilon}\int_{\sqrt{x}}^{x} t^{-2}dt \ll 1$. 
\section{Proofs of Lemma \ref{lem:technical} and Proposition \ref{prop:sharpen}}\label{sec:tech}
\begin{lem}\label{lem:gfdouble}
	Fix $2\le k \le 5$. Suppose $x \ge C_{\varepsilon}$. Let $I$ be the closed interval with endpoints $\alpha$ and $\secondsad$. For $t \in I$,
	\begin{align}\label{eq:gkt}
		g^{(k)}(t) &\asymp_{\varepsilon} (-1)^{k} (\log x) (\log y)^{k-1} \text{  if  } x \ge y \ge (1+\varepsilon)\log x,\\
\label{eq:fkt}		f^{(k)}(t) &\asymp_{\varepsilon} (-1)^{k} (\log x) (\log y)^{k-1} \text{  if  }x \ge y \ge \varepsilon \log x.
	\end{align}
\end{lem}
\begin{proof}
Suppose $x \ge y \ge (1+\varepsilon)\log x$. As shown in Lemma 4 of \cite{HildebrandTenenbaum1986}, 
	\begin{equation}
		g^{(k)}(t) = (-1)^k\sum_{p \le y}(\log p)(p^{t}-1)^{-k}Q_{k-1}(p^{t}\log p)
\end{equation}
for a polynomial $Q_{k-1}$ of degree $k-1$ and non-negative coefficients, so $(-1)^k g^{(k)}(t)$ is positive and monotone for $t>0$. By the same lemma, $g^{(k)}(\alpha)\asymp (-1)^k(\log x)(\log y)^{k-1}$ for $x \ge y \ge \log x$.
	It remains to show $g^{(k)}(\secondsad)$ is also of order $(-1)^k(\log x)(\log y)^{k-1}$. Since $\secondsad \ge c_{\varepsilon}/\log y$ by Corollary \ref{cor:sigmasize}, the same lemma shows that
	\begin{equation}\label{eq:gk}
		(\log y)^{k-1} \frac{y^{1-\secondsad}-1}{1-\secondsad} \ll_{\varepsilon} (-1)^{k} g^{(k)}(\secondsad) \ll_{\varepsilon} (\log y)^{k-1}\sum_{p \le y} \frac{\log p}{p^{\secondsad}-1}. 
	\end{equation}
	By definition of $\secondsad$, the left-hand side is $(\log x)(\log y)^{k-1}$. The sum in the right-hand side is upper bounded in \cite[Eq.~(7.1)]{HildebrandTenenbaum1986} by
	\[ \sum_{p \le y} \frac{\log p}{p^{\secondsad}-1} \ll \frac{1}{1-y^{-\secondsad}} \int_{1}^{y} t^{-\secondsad}dt+O(1) \]
	which is $\ll_{\varepsilon} \log x$ by definition of $\secondsad$. This finishes the proof of \eqref{eq:gkt}. For $f^{(k)}$,
	\begin{align} f^{(k)}(t) = (-\log y)^k I^{(k)}((1-t)\log y).
	\end{align}
Observe $I^{(k)}(v) \asymp e^v/(v+1)$ uniformly for $v \ge 0$ \cite[Lem.~4.5]{Smida}\label{lem:smida} and $e^{v}/(v+1) \asymp u$ as long as $0\le v = \xi(u) + O(1)$. Hence, by monotonicity of $I^{(k)}$, it suffices to show $0 \le (1-t)\log y = \xi(u)+O_{\varepsilon}(1)$ 	holds for $t\in\{\alpha,\secondsad\}$. For $t=\secondsad$ it is trivial. For $t=\alpha$, $(1-\alpha)\log y = \xi(u)+O_{\varepsilon}(1)$ follows from Lemma \ref{lem:sigmastuff} so it is left to show $\alpha<1$. By definition $\sum_{p\le y} \log p/(p^{\alpha}-1)=\log x$, and at $\alpha=1$ the sum is $\log y-\gamma+o(1)$ \cite[p.~182]{MV} which is $< \log x$ when $x \ge C$, so $\alpha < 1$.
\end{proof}

\begin{cor}\label{cor:gfdoublebetter}
Suppose $x \ge C_{\varepsilon}$. Let $I$ be the closed interval with endpoints $\alpha$ and $\secondsad$. For $t \in I$,
	\begin{align}
		g''(t) &= g''(\alpha)( 1+ O_{\varepsilon}(|\alpha-\secondsad|\log y)) \text{ if }x \ge y \ge (1+\varepsilon)\log x,\\ 
\label{eq:fcorr}		f''(t) &= f''(\secondsad)( 1+ O_{\varepsilon}(|\alpha-\secondsad|\log y)) \text{ if }x \ge y \ge \varepsilon\log x.
	\end{align}
\end{cor}
\begin{proof}
	For any $t \in I$, $g''(t) = g''(\alpha) + (t-\alpha)g^{(3)}(t_2)$ for some $t_2\in I$. The estimates for $g''$ and $g^{(3)}$ in Lemma \ref{lem:gfdouble} imply the result for $g''$. 
	The proof of \eqref{eq:fcorr} is similar.
\end{proof}
\begin{lem}\label{lem:gdoubfdoub}
Suppose $x \ge C_{\varepsilon}$. Let $2 \le k \le 4$ and $s \in \{\alpha,\secondsad\}$. We have
	\begin{equation}\label{eq:gfdiffs}
		g^{(k)}(\alpha) -f^{(k)}(\secondsad) = A^{(k-1)}(s) + O_{\varepsilon} (|(\log G)^{(k)}(s,y)|+|\alpha-\secondsad|(\log x)(\log y)^k)
	\end{equation}
in the range $x\ge y \ge (1+\varepsilon)\log x$ if $s=\secondsad$ and in $x\ge y \ge \varepsilon \log x$ if $s=\alpha$.
\end{lem}
\begin{proof}
When $s=\alpha$ we write $g^{(k)}(\alpha)-f^{(k)}(\secondsad) = f^{(k)}(\alpha)- f^{(k)}(\secondsad) + (\log G)^{(k)}(\alpha,y) + A^{(k-1)}(\alpha)$
	and replace $f^{(k)}(\alpha)-f^{(k)}(\secondsad)$ by $(\alpha-\secondsad)f^{(k+1)}(t)$ for $t$ between $\alpha$ and $\secondsad$. Lemma \ref{lem:gfdouble} bounds $f^{(k+1)}(t)$. The case $s=\secondsad$ is similar.
\end{proof}
\subsection{Proof of Lemma \ref{lem:technical} -- first part}
The relations $g'(\alpha)=f'(\secondsad)=0$ can be written as
	\begin{equation}\label{eq:saddef} (-\zeta'/\zeta)(\alpha,y) = (-F_2'/F_2)(\secondsad,y) = \log x.
	\end{equation}
	Writing $\zeta(s,y)$ as $F_2(s,y)G(s,y)\zeta(s)(s-1)$, \eqref{eq:saddef} implies
	\begin{equation}\label{eq:logderdiff} (-F'_2/F_2)(\alpha,y) + (F'_2/F_2)(\secondsad,y) = (G'/G)(\alpha,y)  + \bddfunc(\alpha).
	\end{equation}
	By the mean value theorem, for some $t$ between $\alpha$ and $\secondsad$ we have
	\begin{equation}\label{eq:alphasigmamvt} (-F'_2/F_2)(\alpha,y) + (F'_2/F_2)(\secondsad,y)= (\secondsad-\alpha) f''(t).
	\end{equation}
We compare \eqref{eq:alphasigmamvt} with  \eqref{eq:logderdiff} to find $1+E=f''(\secondsad)/f''(t)$ where $E$ is defined in \eqref{eq:alphadifforder}. By \eqref{eq:fkt}, $1+E \asymp_{\varepsilon} 1$. To upper bound $|E|$ we use the estimate in Corollary \ref{cor:gfdoublebetter} to find $E\ll_{\varepsilon}|\alpha-\secondsad|\log y$. We simplify this using the bound for $|\alpha-\secondsad|$ we just derived: $1+E \asymp_{\varepsilon} 1$ implies $|\alpha-\secondsad| \ll_{\varepsilon}( |\tfrac{G'}{G}(\alpha,y)|+1)/((\log x)(\log y))$. To study $E'$ we argue similarly using the relation
		\begin{equation} 
			(\secondsad-\alpha)g''(t')=-(\zeta'/\zeta)(\alpha,y)+(\zeta'/\zeta)(\secondsad,y)=(G'/G)(\secondsad,y)+\bddfunc(\secondsad)
	\end{equation}
for some $t'$ between $\alpha$ and $\secondsad$. This shows $1+E'=g''(\alpha)/g''(t')$.
	\subsection{Proof of Lemma \ref{lem:technical} -- second part}
	We approximate $g$ at $\alpha$ using a quadratic Taylor polynomial:
	\[ g(\secondsad) - g(\alpha)= \frac{g''(\alpha)(\secondsad-\alpha)^2}{2}  \bigg(1+O_{\varepsilon}\bigg(\frac{|g^{(3)}(t)||\alpha-\secondsad|}{(\log x)(\log y)}\bigg)\bigg) \]
	for some $t$ between $\alpha$ and $\secondsad$. By Lemma \ref{lem:gfdouble}, $g^{(3)}(t)\ll_{\varepsilon} (\log x)(\log y)^2$ and we bound $|\alpha-\secondsad|$ using \eqref{eq:alphadifforder}. Alternatively, $g(\secondsad)-g(\alpha)=g''(t)(\secondsad-\alpha)^2/2$ for some $t$ between $\alpha$ and $\secondsad$, and we appeal to Lemma \ref{lem:gfdouble} with $k=2$. The same arguments work for $f(\alpha)-f(\secondsad)$. 
	\subsection{Proof of Lemma \ref{lem:technical} -- third part} The square of $B$ can be written as
	\begin{equation}\label{eq:varbyvar}
		B^2(x,y)=\frac{f''(\secondsad)}{g''(\alpha)}=1 +\frac{f''(\secondsad)-g''(\alpha)}{g''(\alpha)}.
	\end{equation}
To prove \eqref{eq:BxyH} we estimate the numerator and denominator using \eqref{eq:derivxi} and \eqref{eq:phi2}, respectively. This also shows $B(x,y)$ is bounded when $y \ge \varepsilon\log x$. We turn to \eqref{eq:Buncond}. The denominator is $\asymp_{\varepsilon} (\log x)(\log y)$ by Lemma \ref{lem:gfdouble} and the numerator is estimated in Lemma \ref{lem:gdoubfdoub} in terms of $\alpha-\secondsad$ and $(\log G)^{(2)}$. Estimating $\alpha-\secondsad$ using \eqref{eq:alphadifforder} gives \eqref{eq:Buncond}.
\subsection{Proof of Proposition \ref{prop:sharpen}}\label{sec:sharpen}
The range $2 \log x \ge y > 1+\log x$ is already in Lemma \ref{lem:first} because $\alpha \asymp 1/\log y$ in this range by \eqref{eq:alphasize}. 
If $\log y \le \sqrt{\log x}$ and $y \ge 2\log x$, we make use of the Main Theorem of Saha, Sankaranarayanan and Suzuki \cite{Saha}, which in the current range gives
	\[ \Psi(x,y) = \frac{x^{\alpha}\zeta(\alpha,y)}{\alpha\sqrt{2\pi \phi_2(\alpha,y)}} \left(1+ \frac{g^{(4)}(\alpha)}{8g^{(2)}(\alpha)^2}-\frac{5g^{(3)}(\alpha)^2}{24 g^{(2)}(\alpha)^3}+O\left( \frac{1}{\alpha \log x}\right)\right).\]
	We also use Smida's result \cite[Thm.~1]{Smida}
	\[ \rho(u) = \frac{e^{\gamma-u\xi+I(\xi)}}{\sqrt{2\pi I''(\xi(u))} }\left(1+ \frac{f^{(4)}(\secondsad)}{8f^{(2)}(\secondsad)^2} - \frac{5f^{(3)}(\secondsad)^2}{24f^{(2)}(\secondsad)^3} + O(u^{-2})\right).\]
	We divide these two estimates to get the formulas in Lemma \ref{lem:first}, with the term $1+O(u^{-1})$ replaced by 
	\[1+ \frac{1}{8}\left( \frac{g^{(4)}(\alpha)}{g^{(2)}(\alpha)^2} - \frac{f^{(4)}(\secondsad)}{f^{(2)}(\secondsad)^2}\right) -\frac{5}{24} \left( \frac{g^{(3)}(\alpha)^2}{ g^{(2)}(\alpha)^3} -\frac{f^{(3)}(\secondsad)^2}{ f^{(2)}(\secondsad)^3}\right)  +O\left(\frac{1}{\alpha \log x}\right).\]
	This is estimated in Lemma \ref{lem:gdoubfdoub} as $1+O(1/(\alpha \log x))$ once we invoke \eqref{eq:alphadifforder}, \eqref{eq:zerofree} and \eqref{eq:mtg2}.
	
	It remains to consider $\log y > \sqrt{\log x}$. By Lemma \ref{lem:technical}, \eqref{eq:zerofree} and \eqref{eq:mtg2}, the quantities $g(\alpha)-g(\secondsad)$, $f(\alpha)-f(\secondsad)$, $B(x,y)-1$ and $(\secondsad-\alpha)/\alpha$ are $O(1/\log x)$, so we need to show
	\begin{equation}\label{eq:psiKlog}
	\Psi(x,y) =x\rho(u) K(\secondsad-1)(1+O(1/\log x))= x\rho(u) K(\alpha-1)(1+O(1/\log x)).
\end{equation}
By \cite[Prop.~A.5]{GorodetskyDeBruijn}, \[\Psi(x,y)=x\rho(u)K(\widetilde{\secondsad}-1)\left(1+O\left(\frac{1}{(\log x)(\log y)} + \frac{y}{x\log x}\right)\right)\] for $\widetilde{\secondsad}:=1+\rho'(u)/(\rho(u)\log y)$. This implies the first equality in \eqref{eq:psiKlog} since $-\rho'(u)/\rho(u) \in [\xi(u),\xi(u+1)]=[\xi(u),\xi(u)+O(1/u)]$ \cite[Lem.~2]{Evertse}\cite[Lem.~1]{Hildebrand1984} and so $\widetilde{\secondsad} = \secondsad + O(1/\log x)$. By \eqref{eq:alphadifforder}, the first equality in \eqref{eq:psiKlog} implies the second.
\begin{acknowledgement}
We are grateful to Sacha Mangerel for asking us about the integer analogue of \cite{gorodetsky}, sparking this work, and to Andrew Granville for discussions on exposition. We thank the referees for a careful reading of the manuscript and useful  suggestions.
\end{acknowledgement}
\appendix

\section{Proof of Lemma \ref{lem:truncatedpowerssec2}}\label{sec:appendix}
We follow the proof of \cite[Thm.~12.5]{MV}.
We apply \cite[Cor.~5.3]{MV} with 
$\sigma_0 =  \max\{0,1-\Re s\} + 1/\log x$ and $a_n = \Lambda(n)n^{-s}$ to obtain 
\begin{align}\label{eq:effecperron}
	S_1(x,s) &= \frac{1}{2\pi i}\int_{\sigma_0-iT}^{\sigma_0+iT}  -\frac{\zeta'(s+w)}{\zeta(s+w)} x^w \frac{dw}{w}+E_s,\\
	E_s &\ll \sum_{\substack{x/2<n<2x\\ n \neq x}}\frac{\Lambda(n)}{n^{\Re s}}\min\left\{ 1,\frac{x}{T|x-n|}\right\} + \frac{x^{\sigma_0}}{T}\left( -\frac{\zeta'}{\zeta}\right)(\sigma_0+\Re s).
\end{align}
In the sum over $(x/2,2x)$ we consider separately $n=x'$ and $n \neq x'$. We find
\[ E_s \ll (\log x)x'^{-\Re s}\min\left\{1,\frac{x}{T\langle x \rangle}\right\}+\frac{2^{|\Re s|}x^{1-\Re s}\log^2 x}{T} +\frac{x^{\sigma_0}}{T} \left(-\frac{\zeta'}{\zeta}\right)(\sigma_0+\Re s).\]
We use $-(\zeta/\zeta)'(t) \asymp (t-1)^{-1}$ for $t \in (1,2]$ and $-(\zeta'/\zeta)(t) \asymp 2^{-t}$ for $t \ge 2$ to find that $E_s$ can be absorbed in $R_1(x,T,s)$. Recall $T \ge 2+|\Im s|$. 
By \cite[Lem.~12.2]{MV}, there are $T_1,T_2 \in [T,T+1]$ such that
\begin{equation}\label{eq:T1def}
	\frac{\zeta'}{\zeta}(\sigma+i \Im s-iT_2),\,\frac{\zeta'}{\zeta}(\sigma+i\Im s+iT_1) \ll \log^2 T
\end{equation}
uniformly for $-1 \le \sigma \le 2$. We extend the range of integration in \eqref{eq:effecperron} from $|\Im w| \le T$ to $-T_2 \le \Im w \le T_1$. The error we incur is at most
\[ \ll \frac{x^{\sigma_0}}{T} \left( - \frac{\zeta'}{\zeta}\right)(\sigma_0+\Re s)\]
which can be absorbed in our bound for $E_s$.
Let $K>-\Re s$ denote an odd positive integer which will be taken to $\infty$, and let $\mathcal{C}$ denote the contour consisting of three line segments, connecting $\sigma_0-iT_2$, $-K-\Re s -iT_2$, $-K-\Re s+iT_1$, $\sigma_0+iT_1$, in this order. Cauchy's residue theorem shows that
\begin{multline} \frac{1}{2\pi i}\int_{\sigma_0-iT_2}^{\sigma_0+iT_1} -\frac{\zeta'(s+w)}{\zeta(s+w)} x^w \frac{dw}{w} =\frac{x^{1-s}}{1-s} -\frac{\zeta'(s)}{\zeta(s)}\\-\sum_{-T_2<\Im(\rho-s)<T_1} \frac{x^{\rho-s}}{\rho-s}+ \sum_{1 \le k<K/2}\frac{x^{-2k-s}}{2k+s}
	+ \frac{1}{2\pi i}\int_{\mathcal{C}} -\frac{\zeta'(s+w)}{\zeta(s+w)} x^w \frac{dw}{w}
\end{multline}
if $s \neq 1$. If $s=1$, the integrand has a double pole at $w=0$ and $x^{1-s}/(1-s) - \zeta'(s)/\zeta(s)$ should be replaced with the residue $\log x - \gamma$. 
We shorten the sum over $-T_2<\Im(\rho-s) <T_1$ to one over $-T\le \Im(\rho-s) \le T$, and the incurred error is
\[\ll  \sum_{\Im(\rho-s) \in (T,T_1) \cup (-T_2,-T)} \frac{x^{1-\Re s}}{|\rho-s|} \ll \frac{x^{1-\Re s}\log T}{T} \]
which is acceptable. It remains to bound the integral over $\mathcal{C}$. To bound its horizontal parts, we consider separately three ranges of $\Re w \in [-K-\Re s,\sigma_0]$. The contribution of $\Re w \in [-1-\Re s,\min\{2-\Re s,\sigma_0\}]$ can be bounded using \eqref{eq:T1def}:
\[ \frac{1}{2\pi i} \int_{-1-\Re s+iT_1}^{\min\{2-\Re s,\sigma_0\}+iT_1} -\frac{\zeta'(s+w)}{\zeta(s+w)} x^w \frac{dw}{w} \ll  \frac{\log^2 T}{T}  \frac{x^{\min\{2-\Re s,\sigma_0\}}}{\log x},\]
and the same bound holds if $T_1$ is replaced with $-T_2$. This error is acceptable.

Next, the contribution of $\Re w \in (2-\Re s,\sigma_0]$ should only be considered if this is a non-empty interval, i.e.~when $\Re s > 2 - 1 /\log x$. In this case, we use $-\zeta'(t)/\zeta(t) \ll 2^{-t}$ ($t \ge 2$) to estimate the integral as
\[ \frac{1}{2\pi i} \int_{2-\Re s+iT_1}^{\sigma_0+iT_1} -\frac{\zeta'(s+w)}{\zeta(s+w)} x^w \frac{dw}{w} \ll  \frac{2^{-\Re s}}{T }\frac{(x/2)^{\sigma_0}}{\log x} \ll  \frac{2^{-\Re s}}{T}\]
which is acceptable.
The same bound holds if $T_1$ is replaced with $-T_2$. 
To bound the contribution of $\Re w \in [-K-\Re s,-1-\Re s]$  we make use of \cite[Lem.~12.4]{MV} which says that $(\zeta'/\zeta)(z)\ll \log(|z|+1)$
holds for all $z$ with $\Re z \le -1$ and $\min_{k \ge 1}|z+2k|\ge 1/4$, and so
\begin{equation} \frac{1}{2\pi i}\int_{-K-\Re s +iT_1}^{-1-\Re s +i T_1}-\frac{\zeta'(s+w)}{\zeta(s+w)} x^w \frac{dw}{w} \ll \int_{-K}^{-1} \log(T+|a|) \frac{x^{a-\Re s}}{T} da \ll \frac{\log T}{T} \frac{x^{-1-\Re s}}{\log x}
\end{equation}
which is acceptable. The same bound holds if $T_1$ is replaced with $-T_2$. 
The integral over the vertical part of $\mathcal{C}$ is bounded using $(\zeta'/\zeta)(z)\ll \log(|z|+1)$ again:
\begin{equation}
	\frac{1}{2\pi i} \int_{-K-\Re s-iT_2}^{-K-\Re s+iT_1} -\frac{\zeta'(s+w)}{\zeta(s+w)} x^w \frac{dw}{w}  \ll \frac{\log(KT)}{K+\Re s}x^{-K-\Re s} \int_{-T_2}^{T_1}dt \ll \frac{T\log(KT)x^{-K-\Re s}}{K+\Re s}.
\end{equation}
When we let $K$ tend to $\infty$, this bound tends to $0$.

\bibliographystyle{abbrv}
\bibliography{references}

\end{document}